\documentclass[preprint,12pt]{elsarticle}
\usepackage{natbib}
\usepackage{amsmath}
\usepackage{amsfonts}
\newtheorem{ex}{Example} 
\newtheorem{them}{Theorem} 
\newtheorem{rk}{Remark} 
\newtheorem{lema}{Lemma} 
\newtheorem{props}{Proposition} 
 
\newtheorem{defn}{Definition} 
\newproof{pf}{Proof}
\newproof{pot1}{Proof of Theorem \ref{thmchatt}}
 
\newcommand{\R}{\mathbb{R}}
\newcommand{\N}{\mathbb{N}}

\newcommand{\K}{\mathbb{K}}
\begin{document}

\begin{center}
{\bf \Large On the Bellman equation in recursive  stochastic dynamic programming 
with the  CES aggregator}
\end{center}

\begin{center}  
{\bf  Anna Ja\'skiewicz$^{a}$ and  Andrzej S. Nowak$^{b}$ } 
\end{center}  
$^{a}$Faculty of Pure and Applied Mathematics, Wroc{\l}aw University of Science and 
Technology, Wroc{\l}aw, Poland, {\it email: anna.jaskiewicz@pwr.edu.pl}\\
\noindent $^{b}$Institute of Mathematics, University of Zielona G\'ora,  Zielona G\'ora, Poland,
{\it email: a.nowak@im.uz.zgora.pl} \\

\noindent {\bf Abstract} 
In this paper we investigate discrete-time infinite horizon 
Markov decision processes (dynamic programming models)  with
recursive utilities  defined by the classical \emph{CES} aggregator 
and  the certainty equivalent  operator being the negative of the entropic risk measure.
Under mild conditions on the primitive data  we prove 
existence and uniqueness of a solution to the Bellman equation. Moreover, we show that
for  any given stationary policy, the Koopmans equation  
has a unique solution. These two facts imply that a measurable selector of maxima in the Bellman equation  
is an optimal stationary policy. 
The methods of our proofs are applied to study solutions 
to the Bellman equation for other certainty equivalents, including risk-neutral case.\\

\noindent {\bf JEL classification:} C60, C61,  D81 \\
\noindent {\bf Keywords:}   Recursive utility; \emph{CES} aggregator; Bellman equation;
 Dynamic programming  

\section{Introduction} 

The techniques of dynamic programming are widely used in many areas of economics.
This is very well documented in  books 
\cite{acemo,miao,ss25,ss,stach} and \cite{slp}.

The time additive preference relation with discounting,   due to its simplicity, 
has attracted  attention of many researchers, see   
\cite{bb,slp} for further references. 
 An axiomatic foundation  of the additive discounted utility was given  in \cite{koop}.  The problem 
with time additive discounted utility is that the elasticity of intertemporal substitution is mathematically 
the reciprocal of a relative risk aversion coefficient. 
A critical review of the time
additive preferences can be found in \cite{frk} and its limitation are listed in 
Section 7.2.1.2 in \cite{ss25}. These unfavourable comments were 
 the main reason for introducing the concept of non-additive recursive utility, 
see e.g. \cite{bb,ez,ez1,miao,w1}. 
  
In this paper we study a class of non-additive
recursive utilities  in  dynamic decision  models  within    
a Markov environment. Our basic definitions are similar to those 
in \cite{mm,rens} and   \cite{asj,bjjet}.
This is because we are interested in the use of these utilities 
in the context of dynamic programming.

To describe our main objectives we  introduce necessary mathematical objects. More detailed and
rigorous definitions will be given in the sequel. 
Here we assume that all functions 
under consideration are   bounded and Borel measurable.
  
The sets of all real numbers (positive integers) are denoted by $\mathbb{R}$ ($\mathbb{N}$).
We consider a Markov decision process (\emph{MDP}) with Borel state and action spaces denoted by $S$ and $A$, respectively. A  Markov policy of an  agent is a sequence $\pi=(f_n)$  
of measurable mappings $f_n:S\to A.$ The transition probability
from state $s_n$ to state $s_{n+1}$  is denoted by 
$q(\cdot|s_n, f_n(s_n)).$  
Suppose that $u(s_n,f_n(s_n))$
is a  utility of an agent in period $n.$ 
Here $u:S\times A\to \mathbb{R}$ is a bouded Borel measurable function.   
For any  policy $\pi$ and $\tau\in \mathbb{N},$  we write  $\pi^\tau= (f_\tau,f_{\tau+1},...),$ $\pi^1=\pi,$ and  
define the expected utility from period $\tau$ onwards as follows
$$ J(\pi^\tau)(s_\tau): = 
(1-\beta)\mathbb{E}^{\pi^{\tau}}_{s_\tau}\left[\sum_{n=\tau}^\infty \beta^{n-1}u(s_n,f_n(s_n))\right],
$$
where $\beta \in (0,1)$ is a discount factor, $\mathbb{E}^{\pi^\tau}_{s_\tau}$ 
is the expectation operator on the space of state sequences constructed 
by the Ionescu-Tulcea theorem, see \cite{bs} or \cite{hl}.
Assume that we deal with a stationary case, i.e., $f_n = f$ for all $n\in \mathbb{N}.$ 
Then, we put $J(f)=J(\pi)$ and it holds
$$
J(f)(s) =(1-\beta)u(s,f(s)) +\beta \int_SJ(f)(s')q(ds'|s,f(s))\quad\mbox{for all}\quad s\in S.
$$

Let $W(x,y):= (1-\beta)x+\beta y$ where $x,y\in \mathbb{R}$
and  $E(v)(s,f(s)):= \int_Sv(s')q(ds'|s,f(s))$ for a function $v:S\to\R.$
Then   $v = J(f)$ is the unique solution of the equation
\begin{equation} 
\label{ke1}
v(s)=W(u(s,f(s)), E(v)(s,f(s)))\quad\mbox{for all}\quad s\in S.
\end{equation}
This equation is a stationary version of the Koopmans equation 
in the theory of recursive preferences \cite{koop,koopetal}.
The function $W$ is a special case of an aggregator and $E(v)(s,f(s))$ is the expected value of $v$
with respect to the probability measure $q(\cdot|s,f(s)).$ The quantity   $E(v)(s,f(s))$
is a sort of conditional certainty equivalent of $v$ given the state $s$ and the action $f(s)$ of an agent. 
Let $E(v)(s,a):= \int_S v(s')q(ds'|s,a).$ The Bellman equation in this case is
\begin{equation}
\label{be1}
v(s) =\max_{a\in A} W(u(s,a),E(v)(s,a)) = 
\max_{a\in A}\left[(1-\beta)u(s,a)+\beta E(v)(s,a)\right], \quad s\in S.
\end{equation}
It obviously has a unique bounded solution \cite{black}.  

In the literature,  non-additive recursive preferences 
are mainly described on sequences of temporal utilities with the use of a shift operator  \cite{bb}. 
Other authors use a framework that is closer to ours, defining utilities using an aggregator 
and the certainty equivalent, which is a quasi-arithmetic mean, see, e.g.  \cite{mm,miao,rens}.

Let $\mathbb{R}_+=[0,\infty)$ and 
$U: \mathbb{R}_+\to\mathbb{R}_+$ be an increasing concave function. 
A quasi-arithmetic conditional certainty equivalent of a bounded non-negative function $w$ 
for a given stationary policy $f$ induced by $U$ 
is defined as
\begin{equation}
\label{cceu1}
M(w)(s,f(s)):= U^{-1}\left(\int_S U(w(s'))q(ds'|s,f(s))\right),\quad s\in S.
\end{equation}
We also put
\begin{equation}
\label{cceu2}
M(w)(s,a):= U^{-1}\left(\int_S U(w(s'))q(ds'|s,a)\right).
\end{equation}
A function $v_f$ is a solution to the Koopmans equation, if 
$$v_f(s)= W(u(s,f(s)), M(v)(s,f(s)))\ \ \mbox{for all}\ \ s\in S,$$
and  $v$ is a solution to the Bellman equation, if 
$$v(s)=  \max_{a\in A} W(u(s,a), M(v)(s,a))\ \ \mbox{for all}\ \ s\in S.$$
The above equations are direct equivalents of (\ref{ke1}) and (\ref{be1}).

Theoretical results  on  recursive utilities have been studied in many papers,
see for instance \cite{becketal,bsta,chris, ez,hschemca,hs,mm,w1}, 
where the emphasis was put on existence and uniqueness and, see for instance \cite{brp,ez1,ls}, where
some applications are demonstrated. 
Dynamic programming models including recursive utilities have been recently reported in
\cite{balb,bv,bvv} and \cite{rens} (consult also with Chapters 7 and 8 in  \cite{ss}).

In this paper we study existence and uniqueness of a solution 
to the Bellman equation in a model with recursive utility defined with the aid of the \emph{CES} aggregator
and the conditional certainty equivalent induced by an exponential function. Namely, 
\begin{equation}
\label{rsce}
W(x,y):= 
 \left[(1-\beta)x^{p} + \beta
y^{p}\right]^{\frac 1{p}},
\end{equation}
where $1>p\not=0.$
If $U(z)= re^{rz} -r$ for some $r<0$ and for all $z\ge 0,$ then  (\ref{cceu1})  and (\ref{cceu2}) 
become 
$$M(v)(s,f(s))= \frac{1}{r}\ln \int_S e^{rv(s')}q(ds'|s,f(s))\ \  \mbox{and}\ \  
M(v)(s.a)= \frac{1}{r}\ln \int_S e^{rv(s')}q(ds'|s,a).
$$
Clearly, we plug 
$x= u(s,f(s))$ or $x=u(s,a)$ and $y= M(v)(s,f(s))$ or $y= M(v)(s,a)$ into (\ref{rsce}).

The \emph{CES} aggregator allows to model a lifetime utility by combining current utility 
and the certainty equivalent of future utility into a single recursive equation. 
It separates the elasticity of intertemporal substitution $\frac{1}{1-p}$ 
from the absolute risk aversion parameter $-r.$  
The quantity $-M(v)$ is also known as the entropic risk measure \cite{fs}. Therefore, Sargent and Stachurski
\cite{ss25} call $M$ an entropic certainty equivalent operator.  The recursive utility
obtained in this manner is a member of  a wider class of Kreps-Porteus preferences \cite{kp,kp2}.
Namely, Kreps and Porteus \cite{kp,kp2} provided the foundation which enabled 
Epstein and Zin \cite{ez,ez1} and Weil \cite{w1}
to generalise iso-elastic utility. Further comments and historical remarks 
can be found in \cite{backus,bb,hs,miao,ss25}.

The entropic certainty equivalent operator, used recursively in every step,  
was applied  first by Hansen and Sargent \cite{hsar}     
to solve a linear-quadratic Gaussian control problem. 
Their  approach  inspired research   in other areas, e.g.,  in stochastic optimal economic growth theory 
\cite{bjjet}, dividend payout problem in insurance \cite{bjime} and in  Markov controlled models  \cite{asj}.
An example belonging to the class of recursive utilities studied in this paper
was solved by Weil \cite{w2}, who
found  it appealing in a  study of  precautionary savings and the permanent income hypothesis. 

The growing  popularity of recursive preferences gives rise to 
the expanding domain of their
applications in macroeconomics and finance. The reader is referred in this matter, among others, 
to \cite{backus,tal,ss}.
For instance, Backus et al. \cite{backus} studied implications 
of risk and ambiguity on business cycle fluctuations. 
They demonstrated that 
risk sensitive preferences help to identify the sources of aggregate fluctuations and 
dynamics of asset prices. 
Tallarini \cite{tal}, on the other hand, showed that increasing risk aversion significantly improves the model's 
asset market predictions. Moreover, as argued in  \cite{hsrob},
risk sensitive preferences are also attractive, because they can  be used
to model preferences for robustness. Within  such a framework, 
the risk aversion parameter  becomes  the robustness parameter.
A larger degree of risk aversion is identical to a larger degree of concerns for robustness. 
For instance,  in \cite{bism} the  preferences for robustness allow to identify animal spirits type of behaviour 
in the business cycle models.

Despite the substantial progress on recursive utility, 
less is known about its implications for dynamic programming in economics. 
B\"auerle and Ja\'skiewicz \cite{bjjet} consider risk-sensitive 
economic growth models with time additive aggregator. Ren and Stachurski
\cite{rens} study the \emph{CES} aggregators as in this paper, 
but with the certainty equivalent induced by a power function. 
Some dynamic programming problems were treated in \cite{balb,bv,bvv}, but they do not cover
the case in this paper. 
Dynamic programming with recursive utility in economics was commenced  by Streufert \cite{str} 
and Ozaki and Streufert \cite{os}, 
who introduced  a notion of biconvergence. This condition means that utility values can be 
approximated by increasing and
decreasing orbits. 
Similar techniques were used earlier in the context of dynamic programming 
for classical payoff criteria \cite{ber, black1,schal}. 
Here, it is worth mentioning that   Denardo \cite{den} 
was one of the first authors, who examined 
dynamic programming with recursive preferences.

In operations research and control theory, the payoff criteria  
involving the entropic risk measure were initiated 
by Howard and Matheson \cite{hm} and Jacobson \cite{jaco}.  
It should be noted however that their approach is different, not recursive. Namely, they use
the certainty equivalent (induced by an exponential function) defined on state-action processes. 
Then, the  classical additive discounted utility  is  transformed into an exponential multiplicative form. 
With such an approach  optimal policies are derived from a system of Bellman equations 
and are very often non-stationary. 
For the details the reader is referred to \cite{bjmmor,w}, where the results and applications are provided.

Our results involving the recursive preferences are presented for 
an \emph{MDP} model.
It is worth emphasising that \emph{MDPs} 
play an important  role in operations
research, economics and finance,
see \cite{acemo,bs,hs,hl,miao,ss, stach}.  
We assume that the per-period utilities 
are bounded and impose mild continuity-compactness assumptions. 
We show that the dynamic programming operator  is monotone. In this
way we obtain a solution to the Bellman equation. 
To show its uniqueness we prove that the dynamic programming operator 
is strongly subhomogeneous. In contrast to  \cite{mmtarski}, 
we directly utilise the  form of the time aggregator.
Then, the uniqueness is a consequence of this property. 
We work with the most general possible class of functions,
that is, with the upper semianalytic functions. 
This allows properly define a dynamic programming operator that maps  
an upper  semianalytic function into the same class.
We also prove that, for  any given stationary policy, 
the equation of Koopmans \cite{koop} has a unique solution. 
This second uniqueness result is needed to conclude that a measurable selector of maxima 
in the Bellman equation  is an optimal stationary policy. 
A similar dynamic programming  model is considered in \cite{bvv}. It is fairly general, 
but the underlying assumption for uniqueness of a solution to the Bellman equation 
is one of our  main results saying that for each stationary policy the solution to 
the 	Koopmans  equation is unique.

The remainder of this paper is organised as follows. 
Section \ref{preliminary} introduces basic notation and lemmas. 
In Section \ref{model} 
we define recursive utility in our framework. 
The various sets of basic assumptions and corresponding main results are put in 
Section \ref{bellmany}, where we prove the existence and uniqueness of a solution 
to the Bellman and Koopmans equations. Their proofs are postponed to Appendix (Section \ref{appendix}).
Section \ref{sec5}  demonstrates that these techniques can be applied to models with 
other certainty equivalents, whilst Section \ref{comm} comments on our assumptions.  
Finally, Section \ref{sec6} contains concluding remarks.

\section{Preliminaries} \label{preliminary}
Let $S$ be  a Borel space, i.e., a non-empty Borel subset of a complete separable metric space.
We assume that $S$  is equipped with its Borel $\sigma$-algebra $\mathcal{B}(S).$

By a Borel   transition probability from $S$
to a Borel space $Y$ we mean a function
$\xi: \mathcal{B}(Y ) \times S\to  [0, 1]$ such that, for each $B\in\mathcal{B}(Y),$
$\xi(B,\cdot)$ is
a Borel measurable function on $S$ and $\xi(\cdot,s)$ is a probability measure on $\mathcal{B}(Y )$ 
for each $s\in S.$ We shall write $\xi(B|s)$ for $\xi(B,s).$

An analytic set in $Y$
is a projection of a Borel subset of $Y\times Y$ on the horizontal axis. 
There are analytic sets in $Y$ that are not Borel. However, for any analytic set 
$L_1 \subset Y$and any probability measure $\mu$ on $\mathcal{B}(Y)$ there exists a Borel set $L_2 \subset Y$ 
such that $\mu(L_1 \triangle   L_2)=0.$ There are also other definitions of analytic subsets 
of a Borel space $Y,$ see  Chapter 7 in \cite{bs} and \cite{bfo}.
A set $E\subset Y$ is universally measurable, if $E$ is in the completion of the Borel
$\sigma$-algebra with respect to every probability measure on $\mathcal{B}(Y).$
Clearly, every Borel set is analytic and every analytic set is universally measurable. 

It is said that  $g:Y\to \mathbb{R}$ is an {\it upper semianalytic} function,
if for any $c\in \mathbb{R}$ the set
$\{ y\in Y:  g(y)>c \}$ is analytic.
It is said that  $g:Y\to \mathbb{R}$ is a {\it lower semianalytic} function, 
if for any $c\in \mathbb{R}$ the set
$\{ y\in Y:  g(y)<c \}$ is analytic. It is known that
for any upper semianalytic (or lower semianalytic) function $g$ and any probability measure
$\mu$ on $\mathcal{B}(Y)$ there exists a Borel measurable function $g': Y \to \R$ such that
$g=g'$ on a Borel  set $D \subset Y$ with $\mu(D)=1.$
A function $g: Y \to \mathbb{R}$ 
is universally measurable, if the inverse image $g^{-1}(B)$ is universally measurable for any Borel set $B.$ 
Obviously, every upper or lower semianalytic function is universally measurable.
A detailed discussion and bibliographical notes  on semianalytic and universally measurable functions 
are contained in Chapter 7 in \cite{bs}. 

We write   $\mathcal{A}$  [$\mathcal M$] ($\mathcal{U}$) for the  space of all bounded 
upper semianalytic
[Borel measurable] (upper semicontinuous) functions  $ w:S\to \mathbb{R}$
and use  $\mathcal{C}$  to denote  the subspace of   $\mathcal U$  consisting of all continuous functions. 
By   $\|w\| $  we denote the supremum norm for any 	$w\in \mathcal{A}.$ 	
Fix $\epsilon>0$ and $B >\epsilon.$ We write 	
$\widehat{\mathcal{A}}$  to denote the  space of all functions in $\mathcal{A}$ 
with the values in $[\epsilon,B].$ In the same manner, we  define 
$\widehat{\mathcal{M}},$  $\widehat{\mathcal{U}}$ 
and $\widehat{\mathcal{C}}$ as subsets of $\mathcal M,$ $\mathcal U$ and $\mathcal C,$ respectively.  
Clearly, 
$\widehat{\mathcal{C}}\subset \widehat{\mathcal{U}}\subset \widehat{\mathcal{M}}\subset \widehat{\mathcal{A}}.$

Let $Z$ be a separable metric space. By
$\varphi$ we denote a correspondence  from $S$ to $Z.$ For any $D\subset Z$ we
put
$$
\varphi^{-1}(D):=\{s\in S: \varphi(s)\cap D\not= \emptyset\}.
$$
If $\varphi^{-1}(D)$ is a closed (an open) subset in $S$
for each closed (open) subset $D$ of $Z,$ then $\varphi$ is said to be {\it upper
(lower) semicontinuous}. If  $\varphi$  is both upper and lower semicotinuous, then it is called {\it continuous}.

The following facts can be found in Appendix D in \cite{hl}.
For part (a), consult with Corollary 1 in \cite{bp}, whilst
for part (b), consult with
Theorem 2 on p. 116 in \cite{berge} 
or Lemma 17.30 in \cite{ab}.  

\begin{lema}\label{l1}
Let $S$ and $A$ be Borel spaces and $\mathbb{K}$ be a Borel subset of $S\times A$ such that 
$A(s):= \{a \in A:\   (s,a)\in  \mathbb{K}  \}$ is non-empty and compact for each $s\in S.$ 
Let $w:\mathbb{K}\to \mathbb{R}$ be a bounded Borel measurable function.\\
 (a) If
$w(s,\cdot)$ is upper semicontinuous  on $A(s)$
for every $s\in S,$ then the function
$w^*(s):= \max_{a\in A(s)} w(s,a)$ is Borel measurable and there exists 
a Borel measurable mapping $f:S\to A$ such that
$f(s)\in A(s)$ and $w^*(s)= w(s,f(s))$ for all $s\in S.$\\
(b) If
$w$ is upper semicontinuous  on $\K$ and the correspondence $s\to A(s)$ is upper semicontinuous, then the function
$w^*(s):= \max_{a\in A(s)} w(s,a)$ is upper semicontinuous and there exists a Borel measurable 
mapping $f:S\to A$ such that
$f(s)\in A(s)$ and $w^*(s)= w(s,f(s))$ for all $s\in S.$
\end{lema}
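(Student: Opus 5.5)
Both parts are classical measurable-selection facts, so the plan is to reduce them to the cited results rather than reprove them from scratch.

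\textbf{Part (a).} First fix $c\in\R$ and write
$$\{s: w^*(s)\ge c\}=\mathrm{proj}_S\{(s,a)\in\K: w(s,a)\ge c\}.$$
This is the projection of a Borel set whose $s$-sections $\{a\in A(s): w(s,a)\ge c\}$ are compact, since $w(s,\cdot)$ is u.s.c. on the compact set $A(s)$. By the Arsenin--Kunugui theorem, such a projection is Borel. Hence $w^*$ is Borel measurable.

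Next consider
$$G:=\{(s,a)\in\K: w(s,a)=w^*(s)\}.$$
It is Borel because both $w$ and $w^*\circ\mathrm{proj}_S$ are Borel on $\K$. Each section of $G$ is non-empty, because an u.s.c. function attains its maximum on a compact set, and each section is closed in the compact set $A(s)$, hence compact. A Borel set with nonempty compact sections admits a Borel uniformization (again Arsenin--Kunugui; cf.\ Corollary 1 in \cite{bp}). This yields a Borel $f:S\to A$ with $(s,f(s))\in G$ for all $s$, which is exactly the required selector.

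\textbf{Part (b).} The new claim is upper semicontinuity of $w^*$; the selector then follows from part (a), since an u.s.c. $w$ has u.s.c. sections. To prove $w^*$ is u.s.c., take $s_n\to s$ and pass to a subsequence with $w^*(s_n)\to\limsup_n w^*(s_n)$. Choose maximizers $a_n\in A(s_n)$. Upper semicontinuity of the compact-valued correspondence $A(\cdot)$ means that
$$\{s\}\cup\{s_n: n\in\N\}$$
is compact and its image under $A(\cdot)$ is compact. So a further subsequence gives $a_n\to a$, and closedness of the graph gives $a\in A(s)$. Then
$$\limsup_n w^*(s_n)=\limsup_n w(s_n,a_n)\le w(s,a)\le w^*(s),$$
using upper semicontinuity of $w$ on $\K$. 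This is Berge's maximum theorem in its u.s.c. form (Theorem 2, p.~116 of \cite{berge}, or Lemma 17.30 of \cite{ab}).

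\textbf{Main obstacle.} The main obstacle is the descriptive-set-theoretic step in (a): projections of Borel sets are in general only analytic, so we really need the compact-sections theorem. I would cite it rather than reprove it.

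Two smaller points need checking in (b). The first is that an u.s.c. compact-valued correspondence has closed graph and maps compact sets to compact sets; this is standard. The second is that the domain $\K$ need not be closed, so the limit $(s,a)$ is confirmed to lie in $\K$ via $a\in A(s)$ before applying upper semicontinuity of $w$ there.
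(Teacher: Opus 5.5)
Your proposal is correct and takes essentially the same route as the paper, which gives no argument of its own: for (a) it cites Corollary 1 of Brown and Purves, which rests on the same Borel uniformization of sets with compact sections that you invoke, and for (b) it cites the upper semicontinuous form of Berge's maximum theorem (Lemma 17.30 in Aliprantis--Border). One small correction: your final caveat is moot, because the graph of an upper semicontinuous compact-valued correspondence is closed, so under the hypotheses of (b) the set $\K$ is itself closed in $S\times A$ --- which is exactly what you used to get $a\in A(s)$.
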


The next result  is   a corollary to Proposition 10.1 in \cite{schal}.

 \begin{lema}\label{l2}
Let $(g_n)$ be a  monotone decreasing sequence of upper
semicontinuous functions on a compact metric space $C$.
Then $\lim_{n\to\infty}\max_{a\in C}g_n(a)= 
\max_{a\in C}\lim_{n\to\infty}g_n(a).$
\end{lema}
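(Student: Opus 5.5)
The statement to prove is Lemma \ref{l2}: if $(g_n)$ is a decreasing sequence of upper semicontinuous functions on the compact metric space $C$ and $g:=\lim_{n\to\infty}g_n$, then $\lim_{n}\max_{a\in C}g_n(a)=\max_{a\in C}g(a)$. The plan is a direct compactness argument. First I record the basic facts. Each $g_n$ is upper semicontinuous on a compact set, so it attains its maximum; write $m_n:=\max_{a\in C}g_n(a)$. Since $g_n\ge g_{n+1}$ pointwise, the sequence $(m_n)$ is non-increasing and has a limit $m\in[-\infty,\infty)$. The pointwise limit $g$ exists in $[-\infty,\infty)$. Because $\{g>c\}=\bigcup_n\{g_n>c\}$ is not right, I will instead use $\{g\ge c\}=\bigcap_n\{g_n\ge c\}$, which is an intersection of closed sets; hence $g$ is upper semicontinuous and attains its maximum on $C$ (allowing the value $-\infty$).

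The inequality $m\ge\max_C g$ is immediate. For every $a\in C$ and every $n$ we have $m_n\ge g_n(a)\ge g(a)$, so $m\ge g(a)$ for all $a$.

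For the reverse inequality, choose maximisers $a_n\in C$ with $g_n(a_n)=m_n$. By compactness, pass to a subsequence $a_{n_k}\to a^*$. Fix $j$. For all $n_k\ge j$, monotonicity gives $m_{n_k}=g_{n_k}(a_{n_k})\le g_j(a_{n_k})$. Upper semicontinuity of $g_j$ then yields
$$m=\lim_k m_{n_k}\le\limsup_k g_j(a_{n_k})\le g_j(a^*).$$
Letting $j\to\infty$ gives $m\le g(a^*)\le\max_C g$, which proves the equality.

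The only delicate point is this last step. The sequence $g_{n_k}$ changes together with the points $a_{n_k}$, so upper semicontinuity of a single $g_n$ cannot be used directly. Freezing the index $j$ and using monotonicity to compare $g_{n_k}$ with $g_j$ resolves this. No uniform convergence (Dini-type) argument is needed, and the possible value $-\infty$ causes no trouble because all the inequalities hold in the extended reals.
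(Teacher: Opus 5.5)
Your proof is correct, and it takes a different route from the paper, which gives no argument of its own and simply records the lemma as a corollary of Proposition 10.1 in Sch\"al \cite{schal}. You instead give a short self-contained compactness argument. The inequality $\lim_n\max_C g_n\ge\max_C g$ comes from $g_n\ge g$. The reverse inequality comes from a convergent subsequence of maximisers $a_{n_k}\to a^*$, combined with the device of freezing an index $j$: monotonicity gives $g_{n_k}(a_{n_k})\le g_j(a_{n_k})$ for $n_k\ge j$, you then apply upper semicontinuity of the single function $g_j$, and only afterwards let $j\to\infty$. This is exactly where a naive argument would break down, since the function and the point move together, and you handle it correctly. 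Your route also checks something the statement takes for granted: the maximum on the right-hand side is attained, because $\{g\ge c\}=\bigcap_n\{g_n\ge c\}$ is closed. The citation buys brevity by deferring to a more general result. Your argument makes the paper self-contained for the only way the lemma is used, namely pointwise in $s$ over the compact set $A(s)$ for bounded real-valued functions. One cosmetic fix: drop the aside about $\{g>c\}=\bigcup_n\{g_n>c\}$. That identity is false for a decreasing sequence and plays no role in your proof.
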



\section{The model and recursive utility} \label{model}
 
In this paper we study a {\it Markov decision process } 
defined by the following objects.
\begin{itemize}
\item[(i)]  $S$ is a {\it set of states} and is assumed to be a Borel
space;
\item[(ii)]  $A$ is the {\it action space} and
is also assumed to be a Borel space;
\item[(iii)]$\mathbb{K}$ is a non-empty Borel subset of $S\!\times\! A$.
We assume that for each $s\in S$, the non-empty $s$-section $$A(s) =
\{a\in A: (s,a)\in \mathbb{K}\}$$
of $\mathbb{K}$ represents the {\it set of available actions} in state $s$; 
\item[(iv)]  $q$ is a Borel measurable  transition probability from $\mathbb{K}$
to $S,$ representing the {\it law of motion} among states;
\item[(v)] $u:\mathbb{K}\to \mathbb{R}$ is a Borel measurable non-negative {\it per-period
utility  function} such that $b_0\le u(s,a)\le b_1$ for some $b_1>b_0>0$ and all $(s,a)\in\mathbb{K}$;
\item[(vi)] $\beta\in[0,1)$ is a {\it discount factor.}
\end{itemize}

A {\it policy} of an agent  is a   sequence of Borel measurable mappings
$\pi = (\pi_{n})$ from the history space to the action set. More precisely,
each $\pi_{n}(h_{n})\in A(s_n)$, $n\in\mathbb{N},$ where
$h_{n} = (s_{1},a_{1},\ldots,s_{n-1},a_{n-1},s_{n})$ is the history of the process
up to the $n$-th state. For $n=1,$ $h_1=s_1.$ 
The {\it class of all policies} is denoted by $\Pi$.
Let $\Phi$ be the set of all  Borel measurable
mappings $f$ from $S$ into $A$ such that $f(s)
\in A(s)$ for each $s \in S$.  By Theorem 1 in \cite{bp}, 
if $A(s)$ 
is compact for every $s\in S,$ 
$\Phi\not= \emptyset.$ 
A {\it Markov policy} is a   sequence  
$\pi = (f_{n})$ where each $f_n\in \Phi,$ $n\in\N.$ 
The {\it class of all Markov policies} is denoted by $\Pi_M$.
A {\it stationary policy} is a constant sequence
$\pi = (f,f,\ldots),$ with $f \in \Phi,$ and therefore it
can be identified with the Borel mapping  $f \in\Phi.$

The lifetime utility under any Markov policy is first defined 
for  finite step models. Then  the limit is taken 
as the time horizon tends to infinity. 
A similar construction in  risk-sensitive Markov decision processes involving the additive (affine)  
aggregator and the  entropic certainty equivalent operator is demonstrated in  \cite{asj,bjjet}.

A {\it state-action aggregator} $H$ maps feasible state-action pairs 
$(s,a)\in \mathbb{K}$ and bounded Borel measurable 
functions $v$ into real values $H(s, a, v)$ representing lifetime utility, 
contingent on the current action $a$, the current state $s$ and 
the use of $v$ to evaluate future states. The classical additively 
separable case is implemented by setting:
$$H_{classic}(s,a,v)=(1-\beta)u(s,a)+\beta \int_S v(s')q(ds'|s,a), $$
where $ (s,a)\in \mathbb{K}.$
In this paper we consider a {\it state-action aggregator} $H,$  known as the \emph{CES}
aggregator, which has the form 
\begin{equation}\label{H}
H(s,a,v) = 
\left[ 
(1-\beta)u(s,a)^{p} +\beta\left(M(v)(s,a) 
\right)^{p} \right]^{\frac{1}{ p}},
\ \   (s,a)\in \mathbb{K}, 
\end{equation}
 where  $ p \not=0$, $ p <1$, and  
$$
M(v)(s,a) :=
\frac{1}{r}\ln \int_S e^{rv(y)}q(dy|s,a), \quad r<0.
$$
Let us fix a Markov  policy $\pi = (f_{n}).$ 
We put
\begin{equation}\label{T0}
   \mathcal{T}_{f_n}{\bf 0} (s_n):= (1-\beta)^{\frac{1}{p}}u(s_n,f_n(s_n)),
\end{equation} 	
where $ {\bf 0}(s)\equiv 0$ for all $s\in S.$ 
For any  function $v \in \mathcal{M}$ such that $M(v)(s,a)>0$ for each  $(s,a)\in \mathbb{K}$  
define  
\begin{equation}\label{Top}
\mathcal{T}_{f_n}v(s_n) :=  
\left[(1-\beta)u(s_n,f_n(s_n))^p+\beta\left[
M(v)(s_n,f_n(s_n))\right]^p\right]^{\frac{ 1}{ p}}
\end{equation}
for all $s_n\in S,$  $n\in \mathbb{N}.$ 
Furthermore,  we set   
$$\mathcal{T}_{f_1}\mathcal{T}_{f_2}\cdots \mathcal{T}_{f_n}v(s_1) 
:=\mathcal{T}_{f_1}\circ \mathcal{T}_{f_2}\circ\cdots\circ \mathcal{T}_{f_n}v(s_1),
\quad s_1\in S.$$  
The {\it recursive utility in the finite time horizon} for the initial state  
$s_1\in S$ is defined by the following composition of the operators 
\begin{equation}
\label{utn}
 U_{n}(\pi)(s_1):= 
\mathcal{T}_{f_1}\mathcal{T}_{f_2}\cdots \mathcal{T}_{f_n}{\bf 0}(s_1).
\end{equation}
Here, the  process terminates after $(n-1)$  steps  
with the   terminal utility  $(1-\beta)^{\frac{1}{p}}u(s_n,f_n(s_n)).$  
In Lemma \ref{nierutil}(a) from Appendix A we prove that the sequence  $(U_{n}(\pi)(s_1))$ 
is non-decreasing and 
bounded from above if $p\in(0,1).$ In Lemma \ref{nierutil}(b) we prove that the sequence  $(U_{n}(\pi)(s_1))$ 
is non-increasing and bounded from below if $p<0.$ 
Hence,  the   {\it  recursive utility} of an agent in the infinite time horizon  
is defined as follows
$$U(\pi)(s_1):= \lim_{m\to\infty}U_m(\pi)(s_1), \ \  s_1\in S,\  \pi\in \Pi_M.$$ 
Observe that for a stationary policy $f\in \Phi$ we have
\begin{equation}
\label{fpp1}
U(f)(s)= H(s,f(s), U(f)) = \mathcal{T}_fU(f)(s)\ \ \mbox{for all }\ \ s\in S,
\end{equation}
where $H$ is given in (\ref{H}). Equation (\ref{fpp1}) is known as the {\it Koopmans equation.}
Indeed,  we have
$$ U_{m+1}(f)(s)={\mathcal T}_f  U_{m}(f)(s)= \left[ 
(1-\beta)u(s,f(s))^{p} +\beta\left(M(U_{m}(f))(s,f(s)) 
\right)^{p} \right]^{\frac{1}{ p}},\quad s\in S. $$
Letting $m\to\infty$ and making use of the Lebesgue monotone
convergence theorem and  Lemma \ref{nierutil}, we get the conclusion. 

\begin{rk}\label{all}
In the above definitions we restrict ourselves to Markov policies. 
However, the recursive utilities can be defined for 
any policy in the same manner. 
This is done on purpose
in order to avoid complex notation.  
The proof that Markov policies are enough in dynamic programming is standard
and will be omitted in this paper. 
With respect to this issue consult with, for instance,  p. 190 in \cite{bs} or Section 3 in \cite{hl}.
\end{rk}

\begin{rk}\label{stala}
In this paper we work with different classes of functions whose codomain is $[\epsilon, B].$
If $p<0$, then  $b_0\le(1-\beta)^{\frac 1p} u(s,a)$ for  $(s,a)\in\K$, but 
the constant $B$ can be chosen sufficiently large so that 
$(1-\beta)^{\frac 1p} u(s,a)\le (1-\beta)^{\frac 1p}b_1\le B$
for all $(s,a)\in\K.$
If $p\in (0,1)$ then obviously $(1-\beta)^{\frac 1p} u(s,a)\le b_1$ for  $(s,a)\in\K$, 
but we have to take $\epsilon>0$ such that 
$(1-\beta)^{\frac 1p} u(s,a)\ge (1-\beta)^{\frac 1p} b_0 \ge \epsilon$ for all $(s,a)\in\K.$ 
By suitably chosen constants for the given $p$ in the model, 
all our operators will map the functions with values in  $[\epsilon, B]$ 
in the same class.
\end{rk}

\section{The Bellman equation} \label{bellmany}

\subsection{Existence and uniqueness}  
\label{bellequa}

In the sequel we shall study the Bellman equation corresponding  to the recursive utility generated
by the state-action aggregator $H$ in (\ref{H}).

\begin{defn}
A function $v^*: S\to\mathbb{R}$ is a solution to the {\it Bellman equation}, if for all  $s\in S$
\begin{eqnarray}\label{bell}\nonumber
v^*(s)&=&  \sup_{a\in A(s)}H(s,a,v^*) \\&=& \sup_{a\in A(s)}  \left( (1-\beta)u(s,a)^p 
+\beta\left[\frac{1}{r}\ln\int_S e^{r v^*(s')}
 q(ds'|s,a)\right]^{p}
\right)^{\frac{1}{p}}, 
\end{eqnarray}
and provided the integral in (\ref{bell}) is well-defined.
\end{defn}

In the standard dynamic programming models, with time additive utilities 
and Borel primitive data, solutions to the Bellman equations need not be Borel. 
They belong to the class of upper semianalytic functions, for the details see \cite{bs,bfo}.  
Below we prove  existence and uniqueness of an upper semianalytic  solution to the Bellman equation 
for a general model. This result is indispensable
for having uniqueness of an upper semicontinuous solution or a continuous solution 
to the Bellman equation  in the model satisfying additional continuity-compactness  
assumptions (see conditions (U1)-(U3) and (C1)-(C3) formulated  below).

For $v\in\widehat{\mathcal{A}}$ define 
\begin{equation}
\label{tmax}
\mathcal{T}v(s):= \sup_{a\in A(s)}
H(s,a,v),\ \ s \in S.
\end{equation}
By Lemma \ref{A2}  from Appendix B, the function $\mathcal{T}v\in \widehat{\mathcal{A}}.$ 
Moreover, as in Lemma \ref{monotlem} from Appendix A, one can show that
\begin{equation}\label{monoT}
\mathcal{T}v_1  \le  \mathcal{T}v_2, \quad\mbox{ if } \ v_1\le v_2\ \mbox{ and } \ v_i\in \widehat{\mathcal{A}} 
\mbox{ for }\ i= 1, 2.
\end{equation}
Using this fact  and the well-known 
value iteration algorithm  (see \cite{bs,black1,schal}),
we  state our first result.

\begin{them}\label{thm1} 
There exists a solution  $v^* \in\widehat{\mathcal{A}}$ to the Bellman equation in (\ref{bell}). 
\end{them}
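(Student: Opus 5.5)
Existence will come from monotone value iteration, using that $\mathcal{T}$ maps $\widehat{\mathcal{A}}$ into itself (Lemma \ref{A2}) and is monotone (\ref{monoT}). The cases $p\in(0,1)$ and $p<0$ are treated separately, in the same way Lemma \ref{nierutil} separates them for utilities. For $p\in(0,1)$ we start the iteration from the constant function $v_0\equiv\epsilon$. For $p<0$ we start from $v_0\equiv B$. In both cases the constants are chosen as in Remark \ref{stala}, so that $\mathcal{T}$ maps $[\epsilon,B]$-valued functions into themselves. Put $v_{n+1}:=\mathcal{T}v_n$.

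\textbf{Step 1: monotonicity of the iterates.} Since $\mathcal{T}v_0\in\widehat{\mathcal{A}}$, we have $\mathcal{T}v_0\ge\epsilon=v_0$ in the first case and $\mathcal{T}v_0\le B=v_0$ in the second. Applying (\ref{monoT}) inductively, $(v_n)$ is non-decreasing (resp. non-increasing) and takes values in $[\epsilon,B]$. Hence the pointwise limit $v^*:=\lim_n v_n$ exists and takes values in $[\epsilon,B]$. A monotone pointwise limit of upper semianalytic functions is again upper semianalytic: for an increasing limit, $\{v^*>c\}=\bigcup_n\{v_n>c\}$ is analytic; for a decreasing limit, $v^*=\inf_n v_n$, and a countable infimum of upper semianalytic functions is upper semianalytic (Chapter 7 in \cite{bs}). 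Thus $v^*\in\widehat{\mathcal{A}}$.

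\textbf{Step 2: passing to the limit in the Bellman equation.} For fixed $(s,a)$, the map $v\mapsto H(s,a,v)$ is monotone. Also $e^{rv_n}$ is bounded between $e^{rB}$ and $e^{r\epsilon}$, so by monotone (or dominated) convergence $\int e^{rv_n}dq\to\int e^{rv^*}dq$. Since $\ln$ and $x\mapsto x^p$ are continuous on the relevant compact range, $H(s,a,v_n)\to H(s,a,v^*)$ monotonically.

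In the non-decreasing case, $H(s,a,v_n)\le H(s,a,v^*)$ gives $v_{n+1}\le\mathcal{T}v^*$, so $v^*\le\mathcal{T}v^*$. Conversely, for every $a$, $H(s,a,v^*)=\lim_n H(s,a,v_n)\le\lim_n v_{n+1}=v^*(s)$, and taking the supremum over $a$ gives $\mathcal{T}v^*\le v^*$. So in this case the interchange of $\sup$ and $\lim$ is automatic: a supremum of increasing limits equals the limit of the suprema.

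\textbf{Step 3: the non-increasing case ($p<0$).} This is where I expect the main obstacle. The inequality $\mathcal{T}v^*\le v^*$ is immediate from $v^*\le v_n$. The reverse inequality requires exchanging $\lim_n\sup_a$ with $\sup_a\lim_n$ for a decreasing sequence, which fails in general without compactness. The plan is to invoke Lemma \ref{l2}, which needs $A(s)$ compact and $a\mapsto H(s,a,v_n)$ upper semicontinuous. So I would first prove the result under the compactness/semicontinuity conditions (with $q$ weakly or setwise continuous in $a$ and $u(s,\cdot)$ usc). Under these, each $a\mapsto H(s,a,v_n)$ is usc: the map $a\mapsto \int e^{rv_n}dq$ is lsc or continuous, and $r<0$ and $p<0$ flip the monotonicity appropriately. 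Lemma \ref{l2} then gives $v^*(s)=\lim_n\max_a H(s,a,v_n)=\max_a H(s,a,v^*)$.

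For the fully general semianalytic model without compactness, I would instead try to avoid the decreasing iteration altogether. One option is to obtain the fixed point from the increasing side, using a lower starting function $v_0\equiv\epsilon$ with $\epsilon$ small enough that $\mathcal{T}\epsilon\ge\epsilon$ also for $p<0$. This holds if $\epsilon\le b_0$, since $H(s,a,\epsilon)\ge\min(u,\epsilon)$ by the mean property of the CES aggregator. Then Step 2 applies verbatim. If this works, it handles both signs of $p$ uniformly, and I would adopt it as the main route.
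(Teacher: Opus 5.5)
Your final adopted route is correct and is the paper's proof. That route is increasing value iteration from $v_0\equiv\epsilon$ for every sign of $p$: $\mathcal{T}\epsilon\ge\epsilon$ follows from the mean property when $\epsilon\le b_0$, the increasing limit is upper semianalytic, and $v^*\le\mathcal{T}v^*$ and $\mathcal{T}v^*\le v^*$ follow by monotone convergence. The case split and the decreasing iteration from $B$ for $p<0$ are unnecessary here; that argument needs compactness and Lemma \ref{l2}, and the paper uses it later, under (U1)--(U3), for Theorem \ref{thm1u}.
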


\noindent{\bf Proof } 
Take   $v\equiv \epsilon$ and consider consecutive iterations of $\mathcal T$ with itself. Clearly, 
$\epsilon \le \mathcal{T}\epsilon\le B$ and consequently from
(\ref{monoT}), we conclude that the  sequence of iterations of
$\mathcal T$ with itself, denoted by $(\mathcal{T}^{(k)}\epsilon(\cdot))$, is  increasing and 
$\mathcal{T}^{(k)}\epsilon\in\widehat{\mathcal{A}}$ for every $k\in\N.$ Hence,
the limit $v^*(s):=\lim_{k\to\infty}\mathcal{T}^{(k)}\epsilon(s)$ exists for every $s\in S.$
By Lemma 7.30 in \cite{bs}, $v^*\in \widehat{\mathcal{A}}$. 
Therefore, by the monotone convergence theorem we get
$$v^*(s)\ge  \lim_{k\to\infty}\mathcal{T}^{(k)}\epsilon(s)\ge  \lim_{k\to\infty} H(s,a, \mathcal{T}^{(k-1)}\epsilon)
=H(s,a,v^*), \quad (s,a)\in\K$$ 
and consequently, $v^*\ge \mathcal{T}v^*.$
On the other hand,
$$\mathcal{T}(\mathcal{T}^{(k)}\epsilon)(s)\le \mathcal{T}v^*(s), \quad s\in S,$$
which yields $ v^*\le \mathcal{T}v^*.$ This finishes the proof. $\Box$ \\

To prove uniqueness of a solution to the Bellman equation, we apply some arguments used 
by Marinacci and Montrucchio \cite{mmtarski}, who 
studied uniqueness of fixed points in the theorem of Tarski \cite{tar}. This is related to 
{\it  strong subhomogeneity} property of a dynamic programming operator.  Below we provide 
a precise definition of this property.  

Let $\widehat{\mathcal{F}}$ be some set of real-valued functions on $S$ with values in the interval $[\epsilon, B]$ 
and let $\widehat{\mathcal{T}}$ be an operator
that maps the set  $\widehat{\mathcal{F}}$ into itself. 

\begin{defn}
The operator  $\widehat{\mathcal{T}}$ is strongly subhomogeneous 
in the class of functions $\widehat{\mathcal{F}},$
if   there exists a continuous increasing function
$\psi:(0,1)\to (0,1)$ such that for all $c\in (0,1)$, we have $c<\psi (c)<1$ and     \\
$$\widehat{\mathcal{T}}(cv)(s) \ge \psi(c)\widehat{\mathcal{T}}v(s)$$ for all $s\in S$
and for any $v\in  \widehat{ \mathcal{F}}.$    
\end{defn}

In Appendix C we show that the Bellman operator $\mathcal{T}$ is strongly subhomogeneous
in the class  $\widehat{\mathcal{A}}.$  This fact implies 
uniqueness of  $v^*$ obtained in Theorem \ref{thm1}.  For a proof of the next result consult with Appendix C.
	
\begin{them}\label{thm2}
The function $v^*\in \widehat{\mathcal{A}}$ from Theorem \ref{thm1} 
is the unique solution to the Bellman equation.
\end{them}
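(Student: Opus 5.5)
The proof has two stages. First I show that the Bellman operator $\mathcal{T}$ from (\ref{tmax}) is strongly subhomogeneous on $\widehat{\mathcal{A}}$. Then I run the Marinacci--Montrucchio comparison argument against an arbitrary second fixed point.

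\emph{Stage 1: subhomogeneity of the certainty equivalent.} For $c\in(0,1)$ and $v\in\widehat{\mathcal{A}}$ I claim
$$M(cv)(s,a)\ge cM(v)(s,a).$$
Indeed,
$$M(cv)=c\cdot\frac{1}{cr}\ln\int_S e^{(cr)v}\,dq .$$
The map $t\mapsto \frac1t\ln\int e^{tv}\,dq$ is non-decreasing in $t$ (Lyapunov/Jensen: the power means $(\int (e^{v})^t dq)^{1/t}$ are monotone in $t$). Since $r<cr<0$, this gives the claim.

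\emph{Stage 1: subhomogeneity of $H$.} For either sign of $p$, the map $y\mapsto[(1-\beta)x^p+\beta y^p]^{1/p}$ is increasing on $(0,\infty)$. Hence
$$H(s,a,cv)\ge\bigl[(1-\beta)u^p+\beta c^pm^p\bigr]^{1/p},$$
where $u=u(s,a)\in[b_0,b_1]$ and $m=M(v)(s,a)\in[\epsilon,B]$. I then define
$$\psi(c):=\inf\left\{\frac{[(1-\beta)x^p+\beta c^py^p]^{1/p}}{[(1-\beta)x^p+\beta y^p]^{1/p}}:\ x\in[b_0,b_1],\ y\in[\epsilon,B]\right\}.$$
For each fixed $(x,y)$ the ratio lies strictly between $c$ and $1$. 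This is because $[(1-\beta)x^p+\beta c^py^p]^{1/p}$ exceeds $[c^p((1-\beta)x^p+\beta y^p)]^{1/p}=c[\dots]^{1/p}$, using $c^p<1$ for $p>0$ and the reversed inequalities for $p<0$. The infimum is taken of a continuous function over a compact set, so it is attained and strictly bigger than $c$. Continuity and monotonicity of $\psi$ in $c$ follow because the ratio is jointly continuous and increasing in $c$. I expect that if $\beta=0$ is allowed the ratio is $1$, which is harmless; if some endpoint issue arises I can replace $\psi$ by $\min\{\psi(c),(c+1)/2\}$. Taking the supremum over $a\in A(s)$ then gives $\mathcal{T}(cv)\ge\psi(c)\mathcal{T}v$.

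\emph{Stage 2: uniqueness.} Let $w\in\widehat{\mathcal{A}}$ be another solution and set
$$c^*:=\sup\{c\in(0,1]:\ cv^*\le w\}.$$
Then $c^*\ge\epsilon/B>0$. Suppose $c^*<1$. Monotonicity (\ref{monoT}) and subhomogeneity give
$$w=\mathcal{T}w\ge\mathcal{T}(c^*v^*)\ge\psi(c^*)\mathcal{T}v^*=\psi(c^*)v^*,$$
which contradicts the maximality of $c^*$ since $\psi(c^*)>c^*$. Hence $v^*\le w$, and by symmetry $w\le v^*$.

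\emph{Main obstacle.} The technical point is that $c^*v^*$ may take values below $\epsilon$, leaving $\widehat{\mathcal{A}}$. I will handle this by checking that the Stage 1 inequalities and the monotonicity (\ref{monoT}) hold on the larger class of upper semianalytic functions with values in $[c\epsilon,B]$. This works because $M$ of such a function stays positive, and $\psi$ can be defined with $y$ ranging over $[\epsilon,B]$ since only $M(v)$ for $v\in\widehat{\mathcal{A}}$ enters the infimum. Equivalently, it suffices to extend the inequality $\mathcal{T}(cv)\ge\psi(c)\mathcal{T}v$ to all $c\in[\epsilon/B,1)$ with $v\in\widehat{\mathcal{A}}$.
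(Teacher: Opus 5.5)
Your proof is correct. Stage 1 is essentially the paper's Lemma~\ref{SH}: the paper also obtains $M(cv)\ge cM(v)$ from Jensen's inequality and then bounds $H(s,a,cv)/H(s,a,v)$ uniformly from below. Your infimum over $[b_0,b_1]\times[\epsilon,B]$ is attained at the corner $(x,y)=(b_0,B)$ when $p>0$ and at $(b_1,\epsilon)$ when $p<0$. This reproduces (\ref{numer1}) and (\ref{numer2}), up to using the bounds of $u$ instead of $\epsilon,B$.

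The genuine difference is Stage 2. The paper never compares two fixed points directly. It takes an arbitrary $v_0\in\widehat{\mathcal{A}}$ and proves the sandwich $t_kv^*\le\mathcal{T}^{(k)}v_0\le v^*/t_k$, with $t_0=\epsilon/B$ and $t_{k+1}=\psi(t_k)$. It then uses continuity of $\psi$ to force $t_k\to1$, so $\|\mathcal{T}^{(k)}v_0-v^*\|\le B(1/t_k-1)\to0$. Uniqueness follows by taking $v_0$ to be a second fixed point.

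Your extremal-constant argument with $c^*=\sup\{c\in(0,1]:cv^*\le w\}$ is shorter; the supremum is attained because $v^*\ge 0$. It uses only the single inequality $\psi(c^*)>c^*$, so continuity and monotonicity of $\psi$ are not needed at all. The price is that it yields uniqueness only. The paper's route also gives uniform convergence of value iteration from every starting point in $\widehat{\mathcal{A}}$, with an explicit rate. The paper reuses that fact in the proof of Theorem~\ref{thm1c} and in the remark on convergence rates.

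Finally, the obstacle you flag is real, and it is present, unremarked, in the paper's argument too: there $t_0v^*$ can be as small as $\epsilon^2/B<\epsilon$. Your observation resolves it for both proofs: monotonicity only requires $M(\cdot)>0$, and the subhomogeneity inequality is only ever applied with $v\in\widehat{\mathcal{A}}$.
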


\begin{rk}
The class of upper semianalytic functions is the largest class, in which we can have a solution 
to the Bellman equation. If we take a bounded lower semianalytic function $v,$ then from Lemmas 
\ref{A1} and \ref{A2} we may deduce that
then $M(v)$ is lower semianalytic. Hence, $H(s,a)\to (s,a,v)$ is lower semianalytic. 
However, the supremum  in (\ref{bell}) need not be a universally measurable function \cite{bfo}.
\end{rk}

We shall deal with the following conditions.
\begin{itemize}
\item[(U1)]  The set  $A(s)$ 
is compact for every $s\in S$ and the correspondence  $s\to A(s)$ is upper semicontinuous.
\item[(U2)]  The function $u$ is upper semicontinuous on $\mathbb{K}.$ 
\item[(U3)] The transition probability $q$ is Feller, i.e., the function 
$$(s,a) \to \int_S v(s') q(ds'|s,a)$$
is  continuous on $\mathbb{K}$ for each $v \in\mathcal{C}.$ 
\end{itemize}

Note that under (U1)-(U3) and by Lemma \ref{l3} from Appendix B, 
the function $(s,a)\to H(s,a,v)$ is upper semicontinuous if $v\in\widehat{\mathcal{U}}$. Hence, 
the supremum in (\ref{tmax}) can be replaced by the maximum.
Moreover, by Lemma \ref{l3},
$\mathcal{T}v \in  \widehat{\mathcal{U}}.$
Making use of the 
value iteration algorithm  (see \cite{bs,black,schal} for similar approaches), we formulate our 
corresponding results to Theorems \ref{thm1}-\ref{thm2} when (U1)-(U3) hold. 
However, we start our iterations from the upper bound $B,$
not from $\epsilon$ as in the proof of Theorem \ref{thm1}.

\begin{them}\label{thm1u} 
Assume (U1)-(U3). Then, the following holds:\\
(a) the Bellman
equation has a solution $v^* \in\widehat{\mathcal{U}};$\\
(b) $v^*$ is the unique solution to the Bellman equation in the class $\widehat{\mathcal{A}}.$
\end{them}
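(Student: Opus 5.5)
We run value iteration from the upper bound $B$ instead of from $\epsilon$, so that the iterates decrease and stay upper semicontinuous. Put $v_0\equiv B$ and $v_k:=\mathcal{T}v_{k-1}$. By the remark after (U1)--(U3) (via Lemma \ref{l3}), $\mathcal{T}$ maps $\widehat{\mathcal{U}}$ into itself. Hence every $v_k\in\widehat{\mathcal{U}}$, and the supremum in (\ref{tmax}) is attained. Since $v_1=\mathcal{T}B\le B=v_0$, the monotonicity (\ref{monoT}) gives by induction $v_{k+1}\le v_k$. Thus $v^*(s):=\lim_k v_k(s)$ exists, takes values in $[\epsilon,B]$, and is upper semicontinuous as a decreasing limit of upper semicontinuous functions. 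So $v^*\in\widehat{\mathcal{U}}$.

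To show $v^*=\mathcal{T}v^*$, we first show that $H(s,a,v_k)\downarrow H(s,a,v^*)$ for each $(s,a)\in\K$. Since $v_k\downarrow v^*$, we have $e^{rv_k}\uparrow e^{rv^*}$ because $r<0$, and these functions are bounded by $1$. Monotone convergence then gives $\int e^{rv_k}\,dq\to\int e^{rv^*}\,dq$, hence $M(v_k)(s,a)\to M(v^*)(s,a)$, and the continuous map $y\mapsto[(1-\beta)u^p+\beta y^p]^{1/p}$ (on $y\ge\epsilon$) passes this to $H$. The convergence is monotone because $H$ is monotone in $v$.

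Next fix $s$. The functions $g_k:=H(s,\cdot,v_k)$ are upper semicontinuous on the compact set $A(s)$ by Lemma \ref{l3}, and they decrease in $k$. Lemma \ref{l2} therefore gives
\[
v^*(s)=\lim_k v_{k+1}(s)=\lim_k\max_{a\in A(s)}g_k(a)=\max_{a\in A(s)}H(s,a,v^*)=\mathcal{T}v^*(s).
\]
This proves (a). This interchange of limit and maximum is the main technical point, and it is exactly why we start from $B$ and use upper semicontinuity rather than the measurable argument of Theorem \ref{thm1}. A minor check is that $v^*\ge\epsilon$, so that $M(v^*)>0$ and $H$ is well defined; this follows from Remark \ref{stala}, since $v_k\ge\epsilon$ for all $k$.

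For (b), note that $\widehat{\mathcal{U}}\subset\widehat{\mathcal{A}}$, so $v^*$ is a solution of the Bellman equation in $\widehat{\mathcal{A}}$. Theorem \ref{thm2} states that the Bellman equation has a unique solution in $\widehat{\mathcal{A}}$, namely the function from Theorem \ref{thm1}. Hence the $v^*$ obtained here coincides with it and is the unique solution in $\widehat{\mathcal{A}}$. Alternatively, one can redo the uniqueness argument directly: the strong subhomogeneity of $\mathcal{T}$ on $\widehat{\mathcal{A}}$ (Appendix C) rules out two distinct fixed points. Either way, (b) requires no new argument beyond citing the earlier uniqueness result.
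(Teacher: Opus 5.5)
Your proposal is correct and follows essentially the same route as the paper: value iteration from the constant $B$, decreasing upper semicontinuous iterates via Lemma \ref{l3} and (\ref{monoT}), the interchange of limit and maximum via Lemma \ref{l2}, and uniqueness in $\widehat{\mathcal{A}}$ inherited from Theorem \ref{thm2}. The only difference is that you spell out the pointwise convergence $H(s,a,v_k)\downarrow H(s,a,v^*)$ (monotone convergence applied to $e^{rv_k}$), which the paper leaves implicit.
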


\noindent{\bf Proof } $(a)$
Let  $v\equiv B$ and consider consecutive iterations of $\mathcal T$ with itself. Clearly, 
$\epsilon \le \mathcal{T}B\le B$ and consequently from
(\ref{monoT}), we conclude that  the  sequence of iterations of
$\mathcal T$ with itself, that is $(\mathcal{T}^{(k)}B(\cdot))$, is  decreasing.  
Moreover,   Lemma \ref{l3} implies that 
every function $\mathcal{T}^{(k)}B(\cdot)$ is upper
semicontinuous.  Hence,
$v^*(s):=\lim_{k\to\infty} \mathcal{T}^{(k)}B(s)$ exists for every $s\in S$ 
and is (by Lemma 2.41 in \cite{ab}) upper semicontinuous. 
Making use of Lemma \ref{l2}, we conclude that $v^*=\mathcal{T}v^*.$ 

$(b)$  One one hand, $v^* =\mathcal{T}v^*$ and 
$v^*\in\widehat{\mathcal{U}}.$ On the other hand, by Theorem \ref{thm2}   the operator
$\mathcal T$ has  a unique fixed point, say $v_1^*\in\widehat{\mathcal{A}},$  i.e.,
$v_1^*=\mathcal{T}v_1^*.$ Moreover, from the proof of Theorem \ref{thm2} we must have
$\|v_1^*-v^*\|=0$. Hence, $v_1^*=v^*.$  $\Box$ \\

We may further strengthen assumptions on primitive data to get a continuous solution. 

\begin{itemize}
\item[(C1)]
The set $A(s)$
is compact for every $s\in S$ and  the correspondence $s\to A(s)$ is continuous.
\item[(C2)] The function $u$ is   continuous on $\mathbb{K}.$ 
 \item[(C3)]=(U3) 
\end{itemize}

Then, we arrive at the following equivalent of Theorem \ref{thm1u}.

\begin{them}\label{thm1c} 
Assume (C1)-(C3). Then, we have:\\
(a) the Bellman
equation has a solution $v^* \in\widehat{\mathcal{C}};$\\
(b) $v^*$ is the unique solution to the Bellman equation in the class $\widehat{\mathcal{A}}.$
\end{them}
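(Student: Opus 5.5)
The plan is to mimic the proof of Theorem \ref{thm1u}, replacing semicontinuity by continuity. Part (b) will then follow from Theorem \ref{thm2} exactly as before. The key step is to show that $\mathcal{T}$ maps $\widehat{\mathcal{C}}$ into $\widehat{\mathcal{C}}$ under (C1)-(C3).

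\textbf{Continuity of $H(\cdot,\cdot,v)$.} Take $v\in\widehat{\mathcal{C}}$. Then $e^{rv}$ is bounded and continuous with values in $[e^{rB},e^{r\epsilon}]\subset(0,\infty)$. By the Feller property (C3)=(U3), the map $(s,a)\mapsto\int_S e^{rv(s')}q(ds'|s,a)$ is continuous on $\K$ and takes values in the same interval. Composing with $x\mapsto \frac1r\ln x$ shows that $M(v)$ is continuous with values in $[\epsilon,B]$. Since $u$ is continuous by (C2) and bounded away from $0$, and $(x,y)\mapsto[(1-\beta)x^p+\beta y^p]^{1/p}$ is continuous on $(0,\infty)^2$, the function $(s,a)\mapsto H(s,a,v)$ is continuous on $\K$. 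The range bounds $\epsilon\le\mathcal{T}v\le B$ are already given by Remark \ref{stala}, or by the same computation used in Lemma \ref{l3}.

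\textbf{Continuity of $\mathcal{T}v$.} By (C1), $s\mapsto A(s)$ is compact-valued and continuous. Berge's maximum theorem (Theorem 2 on p.~116 in \cite{berge}, Lemma 17.30 in \cite{ab}) then gives that $\mathcal{T}v(s)=\max_{a\in A(s)}H(s,a,v)$ is continuous. Hence $\mathcal{T}:\widehat{\mathcal{C}}\to\widehat{\mathcal{C}}$. Lemma \ref{l1}(b) additionally provides a Borel maximizer, though that is not needed here.

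\textbf{Obtaining a continuous fixed point.} The obstacle is that a monotone limit of continuous functions need not be continuous, so I will not use the monotone iteration argument of Theorem \ref{thm1u}. I intend to get continuity from uniqueness and a contraction-type estimate instead.

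The fixed point $v^*\in\widehat{\mathcal{U}}$ of Theorem \ref{thm1u} is the unique fixed point in $\widehat{\mathcal{A}}$, and it is the decreasing limit of $\mathcal{T}^{(k)}B$. By Theorem \ref{thm1}, it is also the increasing limit of $\mathcal{T}^{(k)}\epsilon$, because both limits are fixed points and there is only one. Under (C1)-(C3) every iterate $\mathcal{T}^{(k)}\epsilon$ and $\mathcal{T}^{(k)}B$ is continuous. Therefore $v^*$ is simultaneously upper semicontinuous, as a decreasing limit of continuous functions, and lower semicontinuous, as an increasing limit of continuous functions. Hence $v^*\in\widehat{\mathcal{C}}$. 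This neatly avoids needing uniform convergence.

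If one prefers uniform convergence, the strong subhomogeneity argument in Appendix C (Marinacci--Montrucchio) should yield $\|\mathcal{T}^{(k)}B-\mathcal{T}^{(k)}\epsilon\|\to0$. Continuity would then follow from uniform limits.

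\textbf{Part (b).} As in Theorem \ref{thm1u}(b), Theorem \ref{thm2} says the fixed point in $\widehat{\mathcal{A}}$ is unique. It therefore coincides with the continuous $v^*$ found above.
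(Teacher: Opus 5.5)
Your proof is correct and is essentially the paper's argument. The paper also takes the upper semicontinuous fixed point $v^*$ from Theorem \ref{thm1u}, uses Lemma \ref{l3c} to see that the increasing iterates $\mathcal{T}^{(k)}\epsilon$ are continuous so their limit is lower semicontinuous, and identifies that limit with $v^*$ through Theorem \ref{thm2}. The only difference is cosmetic: the paper cites the uniform estimate $\|\mathcal{T}^{(k)}\epsilon-v^*\|\to 0$ from the proof of Theorem \ref{thm2}, whereas you cite Theorem \ref{thm1} plus uniqueness, and both rest on the same strong subhomogeneity argument.
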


\noindent{\bf Proof }  By Theorem \ref{thm1u} we know that $v^*\in\widehat{\mathcal{U}}$ 
and this solution is unique.   
By Lemma \ref{l3c} from Appendix B, for every $k\in\N,$  $\mathcal{T}^{(k)}\epsilon\in \widehat{\mathcal{C}}$ 
and $(\mathcal{T}^{(k)}\epsilon(\cdot))$ is increasing and bounded from above by $B$. Hence,
the limit $v^*_1(s):=\lim_{k\to\infty}\mathcal{T}^{(k)}\epsilon(s)$ exists and is lower semicontinuous.  
From the proof of Theorem \ref{thm2}, we know that $\|\mathcal{T}^{(k)}\epsilon-v^*\|\to 0$ as $k\to\infty.$ 
Thus, $v^*_1=v^*\in \widehat{\mathcal{C}}.$ $\Box$ \\

\subsection{Uniqueness of the Koopmans equation and existence of an optimal policy}  
\label{bellequao}

We first show that the function $U(f)$ for $f\in\Phi$ is the unique solution to the Koopmans equation.   
The proof is  based on the fact that the operator $\mathcal{T}_f$ is strongly subhomogeneous in the class 
$\widehat{\mathcal{M}}.$ This result is applied in the proof of Theorem \ref{thm3} on the existence of 
an optimal stationary policy. The proofs are postponed to Appendix D.

The next result for $p\in (0,1)$ is related to Theorem 1 in \cite{mm}. Marrinacci and Montrucchio \cite{mm} 
showed that for a Thompson-type aggregator and subhomogeneous certainty 
equivalent\footnote{The entropic risk measure operator is subhomogeneous.} the recursive utility is unique.
However, their proof is not stated for negative values of $p,$ when the \emph{CES} aggregator is not of Thompson-type
(see Section \ref{sec6} for the formal definitions).

\begin{them}\label{thm4}
For any stationary policy $f\in \Phi$ the function $U(f)$ is
the unique solution to Koopmans equation (\ref{fpp1}).
\end{them}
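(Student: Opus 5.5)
We follow the route indicated before the statement: establish that $\mathcal{T}_f$ is monotone and strongly subhomogeneous on $\widehat{\mathcal{M}}$, and then run the Marinacci--Montrucchio uniqueness argument.

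\emph{Step 1: $U(f)$ is a solution in $\widehat{\mathcal{M}}$.} By (\ref{fpp1}) and Lemma \ref{nierutil}, $U(f)$ solves the Koopmans equation. It is Borel, being a monotone limit of the Borel functions $U_m(f)$, and by Remark \ref{stala} it takes values in $[\epsilon,B]$. Any other bounded Borel solution $w$ with $M(w)>0$ satisfies $w\ge (1-\beta)^{1/p}u\ge\epsilon$ when $p>0$, or $w\le(1-\beta)^{1/p}u\le B$ when $p<0$. The remaining bound follows from $w=\mathcal{T}_f w$ and boundedness of $u$, after possibly enlarging $B$ or shrinking $\epsilon$. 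So it suffices to prove uniqueness in $\widehat{\mathcal{M}}$.

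\emph{Step 2: monotonicity and strong subhomogeneity of $\mathcal{T}_f$.} Monotonicity follows as in Lemma \ref{monotlem}. For $c\in(0,1)$ and $v\in\widehat{\mathcal{M}}$, Jensen's inequality gives, for the entropic certainty equivalent with $r<0$,
\[
M(cv)=\tfrac1r\ln\int e^{rcv}\,dq\ \ge\ c\,\tfrac{1}{rc}\ln\int e^{rcv}\,dq\ \ge\ c\,M(v),
\]
since $z\mapsto z^{c}$ is concave and $rc>r$. Hence
\[
\mathcal{T}_f(cv)\ge\bigl[(1-\beta)u^p+\beta c^pM(v)^p\bigr]^{1/p}
\]
for either sign of $p$, because $t\mapsto t^p$ followed by $(\cdot)^{1/p}$ preserves order.

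Write $x=u\in[b_0,b_1]$ and $y=M(v)\in[\epsilon,B]$. We need $\psi(c)>c$ with
\[
\bigl[(1-\beta)x^p+\beta c^py^p\bigr]^{1/p}\ge\psi(c)\bigl[(1-\beta)x^p+\beta y^p\bigr]^{1/p}.
\]
The ratio of the left side to the right side equals $\bigl[\lambda+(1-\lambda)c^p\bigr]^{1/p}$ with $\lambda=(1-\beta)x^p/\bigl((1-\beta)x^p+\beta y^p\bigr)$. For $p>0$ this ratio decreases in $c^p$-weight; for $p<0$ the analogous monotonicity holds. Since $x,y$ range over compact positive intervals, $\lambda\in[\lambda_0,1)$ with $\lambda_0>0$. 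Define
\[
\psi(c)=\bigl[\lambda_0+(1-\lambda_0)c^p\bigr]^{1/p}.
\]
This is continuous and increasing, and satisfies $c<\psi(c)<1$: it is a power mean of $1$ and $c$ with positive weight on $1$, and the worst case over $\lambda$ is the endpoint $\lambda_0$. Obtaining this uniform lower bound on $\lambda$ is the point where boundedness of $u$ away from zero ($b_0>0$) and of $v$ in $[\epsilon,B]$ is essential. Checking the direction of monotonicity in $\lambda$ for $p<0$ is the main technical obstacle.

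\emph{Step 3: uniqueness.} Let $v_1,v_2\in\widehat{\mathcal{M}}$ be fixed points. Set
\[
c^*=\sup\{c\in(0,1]: cv_1\le v_2\},
\]
which is at least $\epsilon/B>0$. Suppose $c^*<1$. Then
\[
v_2=\mathcal{T}_fv_2\ge\mathcal{T}_f(c^*v_1)\ge\psi(c^*)\,\mathcal{T}_fv_1=\psi(c^*)\,v_1,
\]
which contradicts maximality because $\psi(c^*)>c^*$. Hence $v_1\le v_2$, and by symmetry $v_1=v_2$. The only care needed is that $c^*v_1$ may leave $[\epsilon,B]$. We handle this by proving the subhomogeneity inequality for all positive bounded Borel $v$ whose values lie in $[c\epsilon,B]$, with $c\ge\epsilon/B$; the bound $\lambda_0$ then depends only on $\epsilon^2/B$ and $B$, which is harmless.
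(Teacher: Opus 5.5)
Steps 2 and 3 are sound. The reduction at the end of Step 1 is not: it is false for $p<0$, and so is the implicit claim that $U(f)$ is unique among all bounded Borel solutions with $M(w)>0$.

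For $p>0$ the reduction works. You get $w\ge(1-\beta)^{1/p}b_0$ directly, and taking suprema in $w^p\le(1-\beta)b_1^p+\beta(\sup w)^p$ gives $\sup w\le b_1$. For $p<0$ the missing bound is the lower one. The analogous estimate $\sup w^p\le(1-\beta)b_0^p+\beta\sup w^p$ yields $w\ge b_0$ when $\inf w>0$, but it is vacuous when $\inf w=0$, and the equation does not exclude that case. Take $S=\mathbb{Z}$, a single action, $u\equiv1$, $q(\cdot|n)=\delta_{n+1}$, $\beta=\frac{1}{2}$, $p=-1$ and any $r<0$. Then $M(w)(n)=w(n+1)$, and the Koopmans equation reads
\[
\frac{1}{w(n)}=\frac{1}{2}+\frac{1}{2}\cdot\frac{1}{w(n+1)},\qquad n\in\mathbb{Z}.
\]
Here $U(f)\equiv1$. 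But for every $c>0$, $w_c(n)=(1+c\,2^n)^{-1}$ is also a solution: it is Borel, takes values in $(0,1)$, has $M(w_c)>0$, and $\inf_n w_c(n)=0$. So for $p<0$ uniqueness genuinely fails outside classes bounded away from zero. The theorem must be read, as in the paper's proof, as uniqueness in $\widehat{\mathcal{M}}$. That is also all Theorem \ref{thm3} needs, since there $v^*\in\widehat{\mathcal{U}}\subset\widehat{\mathcal{M}}$. Drop the last two sentences of Step 1 and state the result in that class.

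With that restriction your proof is complete and is essentially the paper's.
\begin{itemize}
\item Step 2 is Lemma \ref{SH} applied to $\mathcal{T}_f$. Your $\lambda_0$ reproduces (\ref{numer1}) and (\ref{numer2}), with $b_0,b_1$ in place of the paper's bounds on $u$.
\item The ``obstacle'' for $p<0$ is a one-line check. The expression $[\lambda+(1-\lambda)c^p]^{1/p}$ is the weighted $p$-power mean of $1$ and $c$, which is increasing in the weight on $1$ for every $p\neq0$.
\item The first ``$\ge$'' in your Jensen display is actually an equality.
\item The extra care at the end of Step 3 is unnecessary. Only $y=M(v)$ enters your bound, so $\mathcal{T}_f(cv)\ge\psi(c)\mathcal{T}_fv$ holds for every $v\in\widehat{\mathcal{M}}$ and every $c\in(0,1)$, wherever $cv$ lies. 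Monotonicity needs only positivity of $cv$. The paper uses the inequality the same way, applying it to $t_0v^*$.
\end{itemize}
Your uniqueness argument via $c^*=\sup\{c: cv_1\le v_2\}$ differs from the paper's iteration $t_kv^*\le\mathcal{T}_f^{(k)}v_0\le v^*/t_k$ with $t_{k+1}=\psi(t_k)\to1$. Yours needs only $\psi(c)>c$ pointwise, not continuity of $\psi$. The paper's version additionally gives convergence of value iteration from any start in $\widehat{\mathcal{M}}$, with rate $B(1/t_k-1)$, which it reuses elsewhere.
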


\begin{defn}\label{defut2}
A policy $\pi^* $ is optimal, if
$$U(\pi^*)(s)  \ge U(\pi)(s)\quad\mbox{for all}\quad \pi\in \Pi,\ s\in S.$$ 
\end{defn}

\begin{them}\label{thm3}
Assume (U1)-(U3) or (C1)-(C3). Then, there 
exists an optimal stationary policy $f^*\in \Phi$  and  
$$U(f^*)(s)= \mathcal{T}U(f^*)(s)= \mathcal{T}_{f^*}U(f^*)(s)= H(s,f^*(s),U(f^*)) \ \ \mbox{for all}\ \ s\in S.$$
If (U1)-(U3) ((C1)-(C3)) hold, then $U(f^*)\in\widehat{\mathcal{U}}$ ($U(f^*)\in\widehat{\mathcal{C}}$). 
\end{them}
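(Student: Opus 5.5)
The plan follows the classical verification argument: combine Theorem \ref{thm1u} (resp. Theorem \ref{thm1c}) with the uniqueness statement of Theorem \ref{thm4}, and then compare $v^*$ with $U(\pi)$ for an arbitrary policy through finite-horizon iterates.

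\emph{Step 1: constructing $f^*$.} Assume (U1)--(U3) and let $v^*\in\widehat{\mathcal{U}}$ be the solution of the Bellman equation given by Theorem \ref{thm1u}. By Lemma \ref{l3}, the map $(s,a)\to H(s,a,v^*)$ is upper semicontinuous on $\K$, and $s\to A(s)$ is upper semicontinuous with compact values. Lemma \ref{l1}(b) therefore gives $f^*\in\Phi$ with
$$v^*(s)=\max_{a\in A(s)}H(s,a,v^*)=H(s,f^*(s),v^*)=\mathcal{T}_{f^*}v^*(s)\quad\mbox{for all } s\in S.$$
Thus $v^*$ solves the Koopmans equation for $f^*$. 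Since $v^*\in\widehat{\mathcal{M}}$ and $U(f^*)$ also solves it by (\ref{fpp1}), Theorem \ref{thm4} yields $U(f^*)=v^*$. This proves the displayed chain of equalities, and also $U(f^*)\in\widehat{\mathcal{U}}$, or $U(f^*)\in\widehat{\mathcal{C}}$ under (C1)--(C3) by Theorem \ref{thm1c}.

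\emph{Step 2: optimality among Markov policies.} By Remark \ref{all} it suffices to show $U(\pi)\le v^*$ for every $\pi=(f_n)\in\Pi_M$. Monotonicity (\ref{monoT}), together with the analogous monotonicity of each $\mathcal{T}_{f_n}$ (Lemma \ref{monotlem}) and $\mathcal{T}_{f_n}w\le\mathcal{T}w$, gives for every $w\in\widehat{\mathcal{A}}$
$$\mathcal{T}_{f_1}\cdots\mathcal{T}_{f_n}w\le\mathcal{T}^{(n)}w.$$
The argument then splits according to the sign of $p$, because the finite-horizon utilities start from $\mathbf 0$ rather than from a function in $\widehat{\mathcal{A}}$.

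If $p<0$, then $U_n(\pi)$ is non-increasing by Lemma \ref{nierutil}(b). Since $\mathcal{T}_{f_n}\mathbf 0\le B$ by Remark \ref{stala}, monotonicity gives $U_n(\pi)\le\mathcal{T}_{f_1}\cdots\mathcal{T}_{f_{n-1}}B\le\mathcal{T}^{(n-1)}B$. The right-hand side decreases to $v^*$ by the proof of Theorem \ref{thm1u}(a), so $U(\pi)\le v^*$.

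If $p\in(0,1)$, then $U_n(\pi)$ is non-decreasing and bounded by Lemma \ref{nierutil}(a). Here I would instead bound $U_n(\pi)\le\mathcal{T}^{(n-1)}w_0$ with $w_0=\mathcal{T}_{f_n}\mathbf 0\in[\epsilon,B]$, and then $\mathcal{T}^{(n-1)}w_0\le\mathcal{T}^{(n-1)}B$. From the proof of Theorem \ref{thm2}, $\|\mathcal{T}^{(k)}w-v^*\|\to0$ for every starting point $w\in\widehat{\mathcal{A}}$, so passing to the limit gives $U(\pi)\le v^*=U(f^*)$ in this case as well.

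The main obstacle is this final comparison. It must respect the direction of monotone convergence in each case, and it needs the uniform convergence of value iteration to the unique fixed point, which comes from the strong subhomogeneity argument behind Theorem \ref{thm2}, in order to identify the limit of $\mathcal{T}^{(n)}B$ with $v^*$. One must also check that the finite-horizon compositions with a Borel terminal function stay in $\widehat{\mathcal{A}}$, so that (\ref{monoT}) applies at every step.
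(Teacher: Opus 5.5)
Your proposal is correct. Step 1 is exactly the paper's, but Step 2 takes a different route for $p<0$.

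The paper keeps the terminal function $\mathbf 0$. For $p>0$ it writes $U_k(\pi)=\mathcal{T}_{f_1}\cdots\mathcal{T}_{f_k}\mathbf 0\le\mathcal{T}_{f_1}\cdots\mathcal{T}_{f_k}B\le\mathcal{T}^{(k)}B$. For $p<0$ the middle inequality fails, because the convention $\mathcal{T}_f\mathbf 0=(1-\beta)^{1/p}u(\cdot,f(\cdot))$ strictly dominates $\mathcal{T}_fB$. The paper therefore switches to the finite-horizon optimal values $w_n^*=\sup_\pi U_n(\pi)$. It shows by standard finite-horizon dynamic programming that they are upper semicontinuous, decreasing and satisfy $w_{n+1}^*=\mathcal{T}w_n^*$. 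It then obtains a fixed point in the limit via Lemma \ref{l2} and identifies it with $v^*=U(f^*)$ through Theorems \ref{thm1u} and \ref{thm4}.

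You instead peel off the last step: $U_n(\pi)=\mathcal{T}_{f_1}\cdots\mathcal{T}_{f_{n-1}}(\mathcal{T}_{f_n}\mathbf 0)$, with $\mathcal{T}_{f_n}\mathbf 0\in[\epsilon,B]$ by Remark \ref{stala}. This applies monotonicity only to genuinely $[\epsilon,B]$-valued functions. It yields $U_n(\pi)\le\mathcal{T}^{(n-1)}B\downarrow v^*$ directly, without the $w_n^*$ recursion or a second use of Lemma \ref{l2}. The paper's route gives, as a by-product, convergence of the finite-horizon optimal values to $v^*$, which optimality does not require.

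Two small simplifications are available. Your $p<0$ argument works verbatim for $p\in(0,1)$, so the case split is superfluous. The appeal to uniform convergence from the proof of Theorem \ref{thm2} is also redundant. $\mathcal{T}^{(k)}B\downarrow v^*$ holds by construction in Theorem \ref{thm1u}(a), and uniqueness is needed only through Theorem \ref{thm4} in Step 1.
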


Finally, we would like to mention that 
alternatively, one can use another set of assumptions. 
These conditions imply the existence of a Borel measurable solution to the
Bellman equation. 

\begin{itemize}
\item[(S1)]
For each $s \in  S$, the set $A(s)$
is compact.
\item[(S2)] The function $u(s,\cdot)$ is   continuous on $A(s)$ for each $s\in S.$ 
\item[(S3)] The function $a\to\int_S v(s')q(ds'|s,a)$ is continuous for every $s\in S$ and 
any  function $v\in\mathcal{M}.$
\end{itemize}

We define the operator $\mathcal T$ as in (\ref{tmax}) and under (S1)-(S3) 
we have that $\mathcal{T}:\widehat{\mathcal{M}}\to
\widehat{\mathcal{M}}.$ Indeed, it suffices to note that the function $s\to H(s,a,v)$ 
is Borel measurable for every $a\in A(s)$ and $v\in \widehat{\mathcal{M}}.$ Then, by Lemma \ref{l1}(a)
we obtain $\mathcal{T}v\in \widehat{\mathcal{M}}.$  We have the following fact.

\begin{them}\label{thmset}
Assume (S1)-(S3). Then, the Bellman equation has a unique solution $v^*\in\widehat{\mathcal{M}}.$ Moreover, there 
exists an optimal stationary policy $f^*\in \Phi$  and  
$$U(f^*)(s)= \mathcal{T}U(f^*)(s)= {\cal T}_{f^*}U(f^*)(s)= v^*(s) \ \ \mbox{for all}\ \ s\in S.$$
\end{them}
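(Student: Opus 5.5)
We prove Theorem \ref{thmset} in the same three stages used for Theorems \ref{thm1}--\ref{thm3}, now carried out inside the class $\widehat{\mathcal{M}}$.

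\emph{Existence.} Under (S1)--(S3), $\mathcal{T}$ maps $\widehat{\mathcal{M}}$ into itself, as already noted. The key observation is that for $v\in\widehat{\mathcal{M}}$ the map $a\mapsto H(s,a,v)$ is continuous on $A(s)$, not merely upper semicontinuous. Indeed, $e^{rv}$ is bounded and Borel, so (S3) makes $a\mapsto \int_S e^{rv(s')}q(ds'|s,a)$ continuous. Composition with $\frac1r\ln(\cdot)$, with $y\mapsto y^p$ on $[\epsilon,B]$, and with (S2) then gives continuity of $H(s,\cdot,v)$.

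We start value iteration from $v\equiv B$. By (\ref{monoT}) the iterates $\mathcal{T}^{(k)}B$ decrease and lie in $\widehat{\mathcal{M}}$, so $v^*=\lim_k \mathcal{T}^{(k)}B$ is Borel with values in $[\epsilon,B]$.

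Fix $s$ and set $g_k(a):=H(s,a,\mathcal{T}^{(k)}B)$. These functions are continuous on the compact set $A(s)$ and decrease in $k$. By dominated convergence applied inside the integral, $g_k(a)\downarrow H(s,a,v^*)$ pointwise. Lemma \ref{l2} then yields
\[
v^*(s)=\lim_k \max_{a\in A(s)} g_k(a)=\max_{a\in A(s)}H(s,a,v^*),
\]
so $v^*=\mathcal{T}v^*$.

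\emph{Uniqueness.} Since $\widehat{\mathcal{M}}\subset\widehat{\mathcal{A}}$, Theorem \ref{thm2} shows that $v^*$ is the unique fixed point of $\mathcal{T}$ in $\widehat{\mathcal{A}}$, and hence also in $\widehat{\mathcal{M}}$. The only point needing care is that the strong subhomogeneity argument of Appendix C is carried out for $\mathcal{T}$ on $\widehat{\mathcal{A}}$. On $\widehat{\mathcal{M}}$ this is the same operator, so nothing new is required.

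\emph{Optimal policy.} Apply Lemma \ref{l1}(a) to $w(s,a)=H(s,a,v^*)$. This function is Borel on $\mathbb{K}$, because $v^*$ is Borel and $q$ is a Borel transition probability, and it is continuous in $a$. The lemma gives $f^*\in\Phi$ with $v^*(s)=H(s,f^*(s),v^*)=\mathcal{T}_{f^*}v^*(s)$.

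Thus $v^*$ solves the Koopmans equation for $f^*$. By Theorem \ref{thm4}, $v^*=U(f^*)$, which establishes the displayed chain of equalities.

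It remains to show optimality: $U(\pi)\le v^*$ for every Markov policy $\pi=(f_n)$, which suffices by Remark \ref{all}. The operators $\mathcal{T}_{f_n}$ are monotone, as in Lemma \ref{monotlem}, and $\mathcal{T}_{f}w\le\mathcal{T}w$ for every $f\in\Phi$. This gives
\[
U_n(\pi)=\mathcal{T}_{f_1}\cdots\mathcal{T}_{f_n}\mathbf{0}\le \mathcal{T}^{(n-1)}\bigl(\mathcal{T}_{f_n}\mathbf{0}\bigr).
\]

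The last step, and the main obstacle, is to compare the right-hand side with $v^*$ as $n\to\infty$.

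\emph{Case $p<0$.} Here $\mathcal{T}_{f_n}\mathbf{0}\le (1-\beta)^{1/p}b_1\le B$, so monotonicity gives $U_n(\pi)\le \mathcal{T}^{(n-1)}B\downarrow v^*$. Letting $n\to\infty$ yields $U(\pi)\le v^*$.

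\emph{Case $p\in(0,1)$.} Now $\mathcal{T}_{f_n}\mathbf{0}$ takes values in $[\epsilon,B]$. Both $\mathcal{T}^{(n-1)}\epsilon$ and $\mathcal{T}^{(n-1)}B$ converge to $v^*$: the first by the norm convergence obtained in the proof of Theorem \ref{thm2}, the second by the existence step above. By monotonicity $\mathcal{T}^{(n-1)}(\mathcal{T}_{f_n}\mathbf{0})$ is squeezed between them, so it also tends to $v^*$, and therefore $U(\pi)\le v^*$.

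The delicate point is ensuring that the iterates of $\mathcal{T}$ started from any function in $\widehat{\mathcal{A}}$ converge to $v^*$. I would extract this from the strong subhomogeneity estimate in Appendix C, namely the Thompson-metric contraction argument of \cite{mmtarski}, which gives convergence from every starting point in $[\epsilon,B]$.
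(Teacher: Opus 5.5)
Your proposal is correct and follows the route the paper implicitly intends (the paper gives no separate proof): value iteration for existence, uniqueness inherited from Theorem~\ref{thm2} since $\widehat{\mathcal{M}}\subset\widehat{\mathcal{A}}$, a Borel selector from Lemma~\ref{l1}(a), Theorem~\ref{thm4} to identify $v^*=U(f^*)$, and a monotone comparison for optimality. The one real difference is in the optimality step: your bound $U_n(\pi)\le\mathcal{T}^{(n-1)}(\mathcal{T}_{f_n}\mathbf{0})\le\mathcal{T}^{(n-1)}B\downarrow v^*$ holds for both signs of $p$, so it replaces the paper's separate $w_n^*=\sup_\pi U_n(\pi)$ argument for $p<0$ in Theorem~\ref{thm3}, and it also makes your squeeze for $p\in(0,1)$ (and the ``delicate point'') unnecessary.
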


\section{Applications to other certainty equivalents}\label{sec5}

{\it The Epstein-Zin preferences. } Our approach can  be applied 
to the Epstein-Zin-Weil certainty equivalent \cite{miao,ss25}.
In this case, the operator $M$ is replaced by
$$P(v)(s,a):=\left(\int_S v(s')^d q(ds'|s,a)\right)^{\frac 1d},$$
where $d\not =0.$ Clearly, $d=1$ corresponds to the risk-neutral case. 
If $d>1,$ then the agent is risk-loving and if 
$d<1,$ $d\not=0,$ he is risk-averse.
Therefore, we may assume that $d<1$ and $d\not=0.$  
The constant $1-d$ is called a relative risk aversion parameter \cite{miao}. 
Define the operator $\widetilde T$  either for $v\in\widehat{\mathcal{U}}$ or $v\in\widehat{\mathcal{C}}$
as follows
$$\widetilde Tv(s):=\max_{a\in A(s)} \left((1-\beta)u(s,a)^p+\beta [P(v)(s,a)]^p\right)^{\frac{1}{p}}, \quad s\in S.$$
For the model described in Section \ref{model} via points (i)-(vi), we obtain the following result.

\begin{them} \label{thmp}
Under Assumptions (U1)-(U3) there exists a unique solution  
$v^*\in\widehat{\mathcal{U}}$ to the Bellman equation
\begin{equation}
\label{belpot}
v^*(s)  = \widetilde Tv^*(s)=
\max_{a\in A(s)}\left((1-\beta)u(s,a)^p+\beta [P(v^*)(s,a)]^{p}\right)^{\frac{1}{p}}, \quad s\in S.
\end{equation}
If (C1)-(C3) hold, then $v^*\in\widehat{\mathcal{C}}.$
\end{them}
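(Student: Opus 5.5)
We follow the proof of Theorems \ref{thm1u}--\ref{thm1c}, with $M$ replaced by $P$. There are four steps: check that $\widetilde T$ maps the right classes into themselves, prove existence by value iteration from $B$, prove uniqueness by strong subhomogeneity, and treat the continuous case by iterating from $\epsilon$.

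\emph{Step 1: $\widetilde T$ is a well-defined monotone self-map.} Fix $v\in\widehat{\mathcal{U}}$ with values in $[\epsilon,B]$. Then $P(v)(s,a)\in[\epsilon,B]$, and $\widetilde Tv$ takes values in $[\epsilon,B]$ for the constants chosen as in Remark \ref{stala}. Monotonicity, i.e. $v_1\le v_2$ implies $\widetilde Tv_1\le\widetilde Tv_2$, follows because $x\mapsto x^d$ and $x\mapsto x^{1/d}$ are both increasing if $d>0$ and both decreasing if $d<0$, and the same holds for the power $p$ in the aggregator.

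For semicontinuity, suppose $v\in\widehat{\mathcal{U}}$ and $d\in(0,1)$. Then $v^d$ is bounded and upper semicontinuous, so it is the limit of a decreasing sequence of continuous functions. Under (U3) and monotone convergence, $(s,a)\mapsto\int v^d\,dq$ is upper semicontinuous, and composing with increasing maps gives that $P(v)$ is upper semicontinuous. If $d<0$, then $v^d$ is lower semicontinuous, so $\int v^d\,dq$ is lower semicontinuous, and the decreasing map $x\mapsto x^{1/d}$ turns it back into an upper semicontinuous function. Then $(s,a)\mapsto((1-\beta)u^p+\beta P(v)^p)^{1/p}$ is upper semicontinuous for either sign of $p$, since in each case the composed maps are monotone in matching directions. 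By Lemma \ref{l1}(b), $\widetilde Tv\in\widehat{\mathcal{U}}$. Under (C1)--(C3) the same argument with continuous $v$ and Berge's theorem gives $\widetilde Tv\in\widehat{\mathcal{C}}$.

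\emph{Step 2: existence.} Iterate $\widetilde T$ from $B$. The iterates decrease and are upper semicontinuous, so their pointwise limit $v^*$ is upper semicontinuous. To pass to the limit in $P$, use monotone convergence for $v_k^d$: for $d>0$ the sequence decreases and is bounded, for $d<0$ it increases and is bounded, and in both cases dominated convergence applies. This gives $P(v_k)\to P(v^*)$ monotonically, hence $(s,a)\mapsto H(s,a,v_k)$ decreases to $(s,a)\mapsto H(s,a,v^*)$, each of them upper semicontinuous. Lemma \ref{l2} then yields $v^*=\widetilde Tv^*$.

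\emph{Step 3: uniqueness, the main point.} We show that $\widetilde T$ is strongly subhomogeneous on $\widehat{\mathcal{U}}$. Unlike $M$, $P$ is positively homogeneous: $P(cv)=cP(v)$. Hence, for $c\in(0,1)$,
$$\widetilde T(cv)(s)=\max_{a}\left((1-\beta)u^p+\beta c^pP(v)^p\right)^{1/p}.$$
Write $x=(1-\beta)u^p$ and $y=\beta P(v)^p$, and put $\lambda=x/(x+y)$. We need
$$(x+c^py)^{1/p}\ge\psi(c)(x+y)^{1/p}, \quad\text{that is,}\quad \bigl(\lambda+c^p(1-\lambda)\bigr)^{1/p}\ge\psi(c).$$
The bounds $u\in[b_0,b_1]$ and $P(v)\in[\epsilon,B]$ give $\lambda\ge\lambda_0>0$ uniformly. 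For $p\in(0,1)$ the left side is increasing in $\lambda$, so it is at least $\psi(c):=(\lambda_0+c^p(1-\lambda_0))^{1/p}$, which lies strictly between $c$ and $1$. For $p<0$ we have $c^p>1$, the map $t\mapsto t^{1/p}$ is decreasing, and the worst case is again $\lambda=\lambda_0$; the same formula for $\psi$ works, and it satisfies $c<\psi(c)<1$ and is continuous and increasing.

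Given two fixed points $v_1,v_2$, let $c^*=\sup\{c\in(0,1]:cv_1\le v_2\}$, which is at least $\epsilon/B>0$. If $c^*<1$, then
$$v_2=\widetilde Tv_2\ge\widetilde T(c^*v_1)\ge\psi(c^*)v_1,$$
and $\psi(c^*)>c^*$ contradicts the definition of $c^*$. So $c^*=1$, i.e. $v_1\le v_2$, and by symmetry $v_1=v_2$; this is the Marinacci--Montrucchio argument from Theorem \ref{thm2}. The delicate part is checking the uniform lower bound $\lambda_0$ and the direction of the inequalities when $p<0$.

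\emph{Step 4: the continuous case.} Under (C1)--(C3), iterate from $\epsilon$. The iterates are continuous and increase to a lower semicontinuous fixed point; this uses monotone convergence in $P$ and the standard $\sup$-interchange for increasing sequences. By uniqueness this fixed point equals $v^*$, which is therefore both upper and lower semicontinuous, so $v^*\in\widehat{\mathcal{C}}$.
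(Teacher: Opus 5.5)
Your proposal is correct and follows essentially the paper's route. Homogeneity $P(cv)=cP(v)$ replaces the Jensen step (\ref{ji}) in Lemma \ref{SH} and yields the same $\psi$ as (\ref{numer1})--(\ref{numer2}); your $\lambda_0$ is just a reparametrization. Existence comes from iterating $\widetilde T$ from $B$ and applying Lemma \ref{l2}, and continuity from iterating from $\epsilon$ and invoking uniqueness.

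The only variation is how you close uniqueness: you use the extremal constant $c^*=\sup\{c:cv_1\le v_2\}$ rather than the paper's sandwich $t_kv^*\le\widetilde T^{(k)}v_0\le v^*/t_k$. This is equally valid. Because it never uses semicontinuity, it also legitimately covers the merely lower semicontinuous limit in your Step 4, though it forgoes the paper's explicit convergence rate for value iteration.
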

 
The proof is based on Lemma \ref{SH} from Appendix C for the operator $\widetilde T.$ 
It is not difficult to notice that for 
any constant $c>0$ one has $P(cv)=cP(v).$  
Hence, the analogous formula to (\ref{ji}) holds with $P$ instead of $M$ and the form of function $\psi(\cdot)$
remains the same as in Lemma \ref{SH}.

A few words are in order. 
The model with the Epstein-Zin-Weil preferences was considered by Ren and Stachurski \cite{rens} 
with $d=1-\gamma$ and 
$p=1-\rho.$ Ren and Stachurski \cite{rens} examined three cases: 
\begin{itemize}
\item Case 1: $0<\rho<\gamma<1,$ i.e., $1>p>d>0;$
\item Case 2: $0<\rho<1<\gamma,$ i.e., $1>p>0>d;$
\item Case 3: $1<\rho<\gamma,$ i.e., $0>p>d.$ 
\end{itemize}
They proved that the optimality equation has a unique solution and that there exists  an optimal stationary policy.
However, Ren and Stachurski \cite{rens} did not define the recursive utility for any policy and the optimality 
was defined among the class of stationary policies. The proof is based on the Du's theorem, see \cite{du}.
Cases 1 and 3 and two additional cases:
\begin{itemize}
\item Case 4: $0<\gamma<\rho<1,$ i.e., $1>d>p>0;$
\item Case 5: $1<\gamma<\rho,$ i.e., $0>d>p;$
\end{itemize}
were examined by Ja\'skiewicz and Nowak \cite{jn}. They showed that after some transformation  the problem 
of existence and uniqueness of a solution to the Bellman equation 
boils down to the use of the Banach contraction mapping principle.
Our method, on the other hand,  
works for any values $p$ and $d$ provided that they are different from $0$ 
and the per-period utility function
is bounded. \\

{\it A risk-neutral case. } The proof of Lemma \ref{SH} from Appendix C 
can be applied to the risk-neutral case, that is, when the entropic certainty equivalent operator
$M$ is replaced by the conditional expectation, i.e., $E(v)(s,a)=\int_Sv(s')q(ds'|s,a).$ 
The function $\psi(\cdot)$ remains the same as before.
Define the operator $T$  either for $v\in\widehat{\mathcal{U}}$ or $v\in\widehat{\mathcal{C}}$
as follows
$$Tv(s):=\max_{a\in A(s)} \left((1-\beta)u(s,a)^p+\beta [E(v)(s,a)]^p\right)^{\frac{1}{p}}, \quad s\in S.$$
A simple adaptation of the arguments used in the proofs 
of Theorems \ref{thm1u} and \ref{thm1c} yield the following result for the model given in Section \ref{model}
via points (i)-(vi).

\begin{them} \label{thm0}
Under Assumptions (U1)-(U3) there exists a unique solution  
$v^*\in\widehat{\mathcal{U}}$ to the Bellman equation
\begin{equation}
\label{belneu}
v^*(s)  = Tv^*(s)=
\max_{a\in A(s)}\left((1-\beta)u(s,a)^p+\beta [E(v^*)(s,a)]^p\right)^{\frac{1}{p}}, \quad s\in S.
\end{equation}
If (C1)-(C3) hold, then $v^*\in\widehat{\mathcal{C}}.$
\end{them}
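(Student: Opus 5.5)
We follow the route of Theorems \ref{thm1u} and \ref{thm1c}, with the entropic operator $M$ replaced by the conditional expectation $E$. The preliminary step is to check that $T$ maps $\widehat{\mathcal{U}}$ into itself (and $\widehat{\mathcal{C}}$ into itself under (C1)-(C3)). Fix $v\in\widehat{\mathcal{U}}$. Under (U3), $(s,a)\to E(v)(s,a)$ is upper semicontinuous on $\K$, because $v$ is the pointwise limit of a decreasing sequence of bounded continuous functions and a decreasing limit of continuous functions is upper semicontinuous. Moreover $E(v)$ takes values in $[\epsilon,B]$.

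Composition with $y\to y^p$ and $z\to z^{1/p}$ then needs care with signs. For $p\in(0,1)$ both maps are increasing, so $(s,a)\to((1-\beta)u^p+\beta E(v)^p)^{1/p}$ is upper semicontinuous. For $p<0$ both maps are decreasing, and a decreasing map composed with a decreasing map preserves upper semicontinuity: $u^p$ and $E(v)^p$ are lower semicontinuous, their positive combination is lower semicontinuous, and raising to the power $1/p<0$ gives an upper semicontinuous function. This is exactly the argument of Lemma \ref{l3}, with $M$ replaced by $E$. Lemma \ref{l1}(b) then gives that $Tv$ is upper semicontinuous and that the maximum is attained. The bounds of Remark \ref{stala} keep $Tv$ in $[\epsilon,B]$, since $E$ of a function with values in $[\epsilon,B]$ stays in $[\epsilon,B]$. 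Under (C1)-(C3) we have $E(v)$ continuous for $v\in\widehat{\mathcal{C}}$, and Berge's maximum theorem (as in Lemma \ref{l3c}) gives $Tv\in\widehat{\mathcal{C}}$. Monotonicity $v_1\le v_2\Rightarrow Tv_1\le Tv_2$ is immediate, since $E$ is monotone and the aggregator is increasing in $y$ for either sign of $p$.

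Existence comes from iterating downward from the constant $B$. The sequence $T^{(k)}B$ is decreasing, consists of upper semicontinuous functions, and is bounded below by $\epsilon$, so its limit $v^*$ is upper semicontinuous. By monotone convergence, $E(T^{(k)}B)(s,a)\downarrow E(v^*)(s,a)$. Lemma \ref{l2}, applied on the compact set $A(s)$ to the decreasing upper semicontinuous functions $a\to H$-terms, lets us interchange the limit and the maximum, which gives $v^*=Tv^*$.

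Uniqueness rests on strong subhomogeneity, and this is the main point to verify. As the text indicates, the proof of Lemma \ref{SH} carries over, because $E(cv)=cE(v)$ for every $c>0$, exactly as for $P$. For $c\in(0,1)$ we therefore get
\[
\bigl((1-\beta)u^p+\beta c^pE(v)^p\bigr)^{1/p}\ge \psi(c)\bigl((1-\beta)u^p+\beta E(v)^p\bigr)^{1/p},
\]
with the same $\psi$, using $u\in[b_0,b_1]$ and $E(v)\in[\epsilon,B]$. The point is that the $u$-term is not scaled down, which produces a gain $\psi(c)>c$ that is uniform in $(s,a)$ thanks to these bounds. Taking the maximum over $a$ yields $T(cv)\ge\psi(c)Tv$.

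Now let $w$ be another fixed point in $\widehat{\mathcal{U}}$ (or in $\widehat{\mathcal{A}}$). Set $c^*=\sup\{c\in(0,1]: cv^*\le w\}$; this is well defined and at least $\epsilon/B>0$. If $c^*<1$, then
\[
w=Tw\ge T(c^*v^*)\ge\psi(c^*)v^*,
\]
and $\psi(c^*)>c^*$ contradicts the definition of $c^*$. Hence $c^*=1$, that is $v^*\le w$, and by symmetry $w=v^*$.

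Finally, if (C1)-(C3) hold, iterate upward from $\epsilon$. The iterates $T^{(k)}\epsilon$ are continuous and increase to a lower semicontinuous limit. The same monotone argument shows that this limit is a fixed point; alternatively one can use the sandwich $T^{(k)}\epsilon\le v^*\le T^{(k)}B$, together with the uniform convergence provided by the subhomogeneity argument as in Theorem \ref{thm2}. By uniqueness the limit equals $v^*$, so $v^*$ is both upper and lower semicontinuous, hence $v^*\in\widehat{\mathcal{C}}$.
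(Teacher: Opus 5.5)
Your proposal is correct and takes the paper's own route. Existence comes from downward value iteration from $B$ together with Lemma \ref{l2}, which gives an upper semicontinuous fixed point. Uniqueness comes from strong subhomogeneity of $T$ with the same $\psi$, via $E(cv)=cE(v)$, and continuity under (C1)-(C3) comes from upward iteration from $\epsilon$. The only variation is how you pass from subhomogeneity to uniqueness. You use the extremal constant $c^*=\sup\{c\in(0,1]: cv^*\le w\}$, whereas the paper uses the iterated sandwich $t_kv^*\le T^{(k)}v_0\le v^*/t_k$ from the proof of Theorem \ref{thm2}. Your version is shorter and does not need continuity of $\psi$. The paper's version additionally gives uniform convergence of value iteration from any starting point, at the rate $B(1/t_k-1)$.
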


If $s'=g(s,a)$ is a continuous function and $q(\cdot|s,a)=  \delta_{g(s,a)}(\cdot),$ 
then we deal with the {\it deterministic model}.  
Here, $\delta_{s'}$ is the Dirac measure concentrated at the point $s'\in S$.
The assertions of Theorem \ref{thm0} hold and the Bellman equation has the form
\begin{equation}
\label{ourspecialbe} v^*(s)= 
\max_{a\in A(s)}\left((1-\beta))u(s,a)^p+\beta v^*(g(s,a))^p\right)^\frac{1}{p}.
\end{equation}
The above results are stated for any $p<1$ and $p\not=0.$
We would like to point out that in the case $0<p<1$ we can make an important simplification and assume that
$$0\le u(s,a)\le B\ \ \mbox{for all}\ \ (s,a)\in
\mathbb{K}.$$
Here, $u$ need not be bounded from below by a positive constant.
Substituting $w=v^{*p}$ we can write (\ref{belneu}) in the equivalent form
$$ w(s) := 
T_1w(s):= \max_{a\in A(s)}
\left((1-\beta)u(s,a)^p+\beta [E(w^{1/p})(s,a)]^p\right).
$$
Moreover, (\ref{ourspecialbe}) can be equivalently written as 
$$
w(s)= T_2w(s):= \max_{a\in A(s)}
\left((1-\beta))u(s,a)^p+\beta w(g(s,a))\right).
$$
Note that the operator $T$ is not a contraction, whereas
$T_i$  ($i=1,2$) are $\beta$-contractions.

\begin{rk}   (a) From the form of the function $\psi(\cdot)$ derived in  Lemma \ref{SH}  and applied to 
the proofs of Theorems \ref{thm2}, \ref{thm1u}, \ref{thm1c}, \ref{thm4}, \ref{thmp} and \ref{thm0},
we conclude that for any constant $\eta\in [\epsilon,B],$ we have
$$\|\widehat{\mathcal{T}}^{(n)}\eta -v^*\| \le B\left(\frac{1}{t_n} - 1\right),\ \ n\in \mathbb{N},$$
where $t_n =\psi(t_{n-1}),$ and $t_0=\epsilon/B$ and  
$\widehat{\mathcal{T}}=\mathcal{T}$ or $\widehat{\mathcal{T}}=T$ or $\widehat{\mathcal{T}}=\widetilde T.$
If $p\in (0,1)$, then $\psi(\cdot)$ is given in (\ref{numer1})
and if $p<0,$ then $\psi(\cdot)$ is given in (\ref{numer2}).  
The above comments concern both risk-sensitive and risk-neutral cases.

(b) If $p\in (0,1),$ then we can apply the Banach contraction mapping principle to the  operators $T_i$ 
to get a  unique fixed point $w_i^*,$ $i=1,2$. Since
now  $0\le u\le B$, we obtain that
$$\|T_i^{(n)}{\bf 0}- w_i^*\| \le \frac{\beta^n}{1-\beta}
\|T_i{\bf 0}- {\bf 0}\| \le 
\beta^nB^p,\quad \ n \in \mathbb{N}.$$  
Thus, we have a geometric convergence.\footnote{See Section 3.12 in \cite{ab}.}  Clearly, 
$v^*_1=w_1^{*1/p}$ and  $v^*_2=w_2^{*1/p}$ are unique solutions to the Bellman equations 
(\ref{belneu})  and (\ref{ourspecialbe}), respectively.

(c) By Lemma \ref{l1}(a), there exists a selector $f^*\in\Phi$ of the maxima on the right-hand side of (\ref{belpot}).
Proceeding along the lines as in Theorem \ref{thm3} we conclude that $f^*$ is an optimal stationary policy. 
The same remark applies to the risk-neutral case and equation (\ref{belneu}).
\end{rk}

\section{Comments on a lower bound of $u$} \label{comm}

In the model, we assume that $0<\epsilon \le u \le B$.
This condition is  needed when $p<0.$ If $p\in (0,1)$,
it looks too strong. It excludes for example the well-known \emph{DARA} utility function
$u(s,a)= a/(a+1)$, where $s\ge 0$, $a\in [0,s].$  In addition, $u(s,a)= 1 - e^{-a}$ 
is also excluded by this restriction.

In the proof of uniqueness of solution to the Bellman equation, 
we need the property that there exists $\epsilon >0$ such that
for a fixed point $v$ of the  Bellman operator $\mathcal T$
we have $v\ge \epsilon.$ 
We propose the following assumption. 
\begin{itemize}
\item[(A)] Let $u\ge 0$ and $p\in(0,1).$ 
There exists $\epsilon >0$ such that $\epsilon \le \mathcal{T}^{(2)}{\bf 0}(s)$ for all $s\in S.$
\end{itemize}
Here, we use the same letter $\epsilon>0$ as in the previous sections, but obviously it need not be same. 
Note that 
$$\widehat{u}(s):=\mathcal{T}{\bf 0}(s)= \max_{a\in A(s)} (1-\beta)^{\frac{1}{p}}u(s,a)$$
 and 
  $$\mathcal{T}^{(2)}{\bf 0}(s)= \max_{a\in A(s)}\left( 
(1-\beta)u(s,a)^p +\beta [M (\widehat{u})(s,a)]^p    \right)^{\frac{1}{p}}
$$
 
\begin{ex} 
Let  $S=\mathbb{R}_+,$ $A(s)=[0,s]$ for $s\in S.$ 
Assume that $u(s,a)=a/(a+1).$ Then $\widehat{u}(s)= s/(s+1).$ 
The next state is given by the equation
$$s_{t+1}= (s_t - a_t)\rho + \xi_{t+1},$$
where  $\rho>0$ and $(\xi_\tau)$ is an i.i.d. sequence
having a probability distribution $\lambda$ on $S.$ We put
$$\mu=  \int_Se^{r\widehat{u} (s)}
\lambda(ds).$$
Since $\widehat{u}(s)>0$ for $s>0$, we have $0<\mu<1.$
Moreover, since $\widehat{u}$ is increasing and $r<0$,
 $$e^{r\widehat{u}((s-a)\rho +\xi)}
\le e^{r\widehat{u}( \xi)}.$$  
 Hence,
$$\int_S e^{r\widehat{u}(s')}q(ds'|s,a) \le 
\int_Se^{r\widehat{u}(s)} \lambda(ds) =\mu$$
and 
$$M(\widehat{u})(s,a)\ge \frac{\ln\mu}{r}.$$
Thus
\begin{equation}
\label{in2}
\mathcal{T}^{(2)}{\bf 0} (s) \ge \epsilon := \frac{\beta^{\frac{1}{p}}
\ln\mu}{r}>0.
\end{equation}
A similar example is given  in \cite{bvv}, but the probability distribution $\lambda$ is on
a closed interval included in $(0,\infty)$. Hence, the support of $\lambda$  is separated from 0. 
\end{ex} 

\begin{props}\label{p1}
If (A) holds, then the Bellman equation has a unique solution $v^*\in \widehat{\mathcal{A}}.$ 
\end{props}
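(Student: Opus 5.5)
The plan is to reduce Proposition \ref{p1} to Theorems \ref{thm1} and \ref{thm2} by showing that, under (A), every relevant function lies in a class with values in some $[\epsilon,B]$. Here $\epsilon>0$ comes from (A), and $B\ge b_1$ is chosen with $B\ge (1-\beta)^{1/p}b_1$. Since $p\in(0,1)$ and $M(v)\le \|v\|$, constants $v\le B$ give $\mathcal{T}v\le [(1-\beta)B^p+\beta B^p]^{1/p}=B$.

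\textbf{Existence.} I would start value iteration from ${\bf 0}$ rather than from $\epsilon$. The monotonicity (\ref{monoT}) extends to nonnegative functions, because the maps $y\mapsto y^p$ and $M$ are monotone. Note that $\mathcal{T}{\bf 0}=\widehat u\ge 0={\bf 0}$, using the convention (\ref{T0}) and $M({\bf 0})=0$. Hence $(\mathcal{T}^{(k)}{\bf 0})$ is nondecreasing, bounded by $B$, and upper semianalytic by Lemma \ref{A2}. The limit $v^*$ is therefore in $\mathcal{A}$ by Lemma 7.30 in \cite{bs}. By (A), $v^*\ge \mathcal{T}^{(2)}{\bf 0}\ge\epsilon$, so $v^*\in\widehat{\mathcal{A}}$. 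The argument of Theorem \ref{thm1}, namely monotone convergence inside $M$ followed by the two inequalities, then gives $v^*=\mathcal{T}v^*$.

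\textbf{Uniqueness.} The key observation is that any solution $w\in\mathcal{A}$ with $0\le w\le B$ automatically satisfies $w\ge\epsilon$. Indeed, $w\ge{\bf 0}$, so monotonicity gives $w=\mathcal{T}^{(2)}w\ge\mathcal{T}^{(2)}{\bf 0}\ge\epsilon$. Likewise, every solution is bounded by $B$: if $w$ is bounded with $\|w\|=L$, then $w=\mathcal{T}w\le[(1-\beta)b_1^p+\beta L^p]^{1/p}$. Taking the supremum forces $L\le b_1\le B$. Thus every bounded nonnegative upper semianalytic solution lies in $\widehat{\mathcal{A}}$ for our $\epsilon,B$.

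It remains to apply the strong subhomogeneity of $\mathcal{T}$ on $\widehat{\mathcal{A}}$ from Lemma \ref{SH} (Appendix C), exactly as in the proof of Theorem \ref{thm2}. This yields a unique fixed point in $\widehat{\mathcal{A}}$.

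\textbf{Main obstacle.} The step I expect to need care is checking that Lemma \ref{SH} and the proof of Theorem \ref{thm2} do not use the hypothesis $u\ge b_0>0$. They should rely only on the fixed points lying in $[\epsilon,B]$. If $\psi$ there depends on a lower bound of $u$, I would redo the estimate for $p\in(0,1)$ directly. For $v\in\widehat{\mathcal{A}}$, $c\in(0,1)$ and $r<0$, subhomogeneity of the entropic measure gives $M(cv)\ge cM(v)$. Hence
$$H(s,a,cv)^p\ge (1-\beta)u^p+c^p\beta M(v)^p\ge c^p H(s,a,v)^p,$$
with the strict gain coming from $M(v)\le B$ and $H(s,a,v)\ge\epsilon$. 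Writing $\beta M(v)^p=\theta H^p$ with $\theta\le 1-\ldots$ requires either $u$ bounded below or using $H\ge\epsilon$ together with $M(v)^p\le B^p$.

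The cleanest route is to apply the argument to $\mathcal{T}^{(2)}$ rather than $\mathcal{T}$, on functions already bounded below by $\epsilon$. I would write $H^p=(1-\beta)u^p+\beta M(v)^p$ and bound the gain $(1-c^p)(1-\beta)u^p$ from below using (A). If this is delicate, I would instead fall back on the Thompson-metric style argument of Marinacci--Montrucchio \cite{mmtarski}, applied to the two fixed points $v^*\ge\epsilon$ and $w\ge\epsilon$.
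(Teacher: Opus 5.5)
Your existence argument and your observation that every solution $w\in\mathcal{A}$ with $0\le w\le B$ satisfies $w=\mathcal{T}^{(2)}w\ge\mathcal{T}^{(2)}{\bf 0}\ge\epsilon$ are exactly the paper's proof. The paper then simply asserts uniqueness in $\widehat{\mathcal{A}}$ ``in the same manner as in the proof of Theorem~\ref{thm2}''. You are right that this last step does not transfer verbatim. For $p\in(0,1)$ the proof of Lemma~\ref{SH} uses $(1-\beta)u^p(1-b^p)\ge(1-\beta)\epsilon^p(1-b^p)$, i.e.\ $u\ge\epsilon$, which (A) does not give; in the paper's own example $u(0,0)=0$ is forced.

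However, your proposal leaves this step open, and the repairs you sketch fail, so uniqueness is a genuine gap. There is no strict gain ``from $M(v)\le B$ and $H(s,a,v)\ge\epsilon$''. The only uniform gain is $H(s,a,cv)^p-c^pH(s,a,v)^p\ge(1-c^p)(1-\beta)u(s,a)^p$, which vanishes where $u=0$, and (\ref{ji}) is an equality when $v$ is $q(\cdot|s,a)$-a.s.\ constant. For instance, let $s_1$ be absorbing with $u=1$, and let $s_0$ have a single action with $u=0$ leading to $s_1$. Then (A) holds, but $\mathcal{T}(cv)(s_0)=c\,\mathcal{T}v(s_0)$ for every $v$, so $\mathcal{T}$ is not strongly subhomogeneous on $\widehat{\mathcal{A}}$.

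Passing to $\mathcal{T}^{(2)}$ does not help, because (A) bounds $\mathcal{T}^{(2)}{\bf 0}$ from below, not $u$ along the actions that are optimal for a given $v$. Take a chain $s\to t\to z$ of deterministic zero-utility moves, where each state also has a side action of small positive utility leading to low-value states. This satisfies (A), yet for $c$ close to $1$ and $\epsilon$ small, $\mathcal{T}^{(2)}(cv)(s)=c\,\mathcal{T}^{(2)}v(s)$ when $v$ equals $B$ at $z$ and $\epsilon$ elsewhere. The Thompson-metric fallback is only named, and it runs into the same degeneracy at zero current utility.

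A clean way to close the gap, in the spirit of the operators $T_1,T_2$ in Section~\ref{sec5}, is to pass to $w=v^p$. A bounded $v\ge0$ solves the Bellman equation iff $w(s)=\sup_{a\in A(s)}\bigl[(1-\beta)u(s,a)^p+\beta\Lambda(w)(s,a)\bigr]$, where $\Lambda(w):=[M(w^{1/p})]^p$. The map $\Lambda$ is monotone. For constants $\delta>0$, put $g=(w+\delta)^{1/p}$ and let $dQ\propto e^{rg}\,dq(\cdot|s,a)$ be the tilted measure. Then $\frac{d}{d\delta}\Lambda(w+\delta)=\mathbb{E}_Q[g^{1-p}]/M(g)^{1-p}\le 1$. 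Indeed, Jensen gives $\mathbb{E}_Q[g^{1-p}]\le(\mathbb{E}_Qg)^{1-p}$, and $M(g)=\mathbb{E}_Qg+|r|^{-1}\mathrm{KL}(Q\|q(\cdot|s,a))\ge\mathbb{E}_Qg$. Hence $\Lambda$ is nonexpansive in the supremum norm, the $w$-operator is a $\beta$-contraction, and there is at most one bounded nonnegative solution. (A) is then needed only to guarantee that this solution is $\ge\epsilon$, i.e.\ lies in $\widehat{\mathcal{A}}$.
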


\noindent {\bf Proof } 
Taking the iterations of $\mathcal{T}$ with itself on the zero function ${\bf 0}$ we conclude as in the proof of Theorem 
\ref{thm1} that the sequence $(\mathcal{T}^{(k)}{\bf 0}(\cdot))$ is increasing and bounded by $B.$ Hence,
the limit $v^*(s):=\lim_{k\to\infty} \mathcal{T}^{(k)}{\bf 0}(s)$ exists for every $s\in S,$  
$v^*\in \widehat{\mathcal{A}}$ is a solution to the Bellman equation. 
In the same manner as in the proof of Theorem \ref{thm2}, we show that $v^*$ is the unique fixed point of $\mathcal T$
in the class of $\widehat{\mathcal{A}}.$  
Let now ${\cal T}v =v$ for some $v\in \mathcal{A}$ and $0\le v\le B.$  
We have  that $\mathcal{T}{\bf 0}\le \mathcal{T}v =v$ and 
$\mathcal{T}^{(2)}{\bf 0}\le \mathcal{T}v =v.$ Since
$\epsilon\le \mathcal{T}^{(2)}{\bf 0}$, we get 
$\epsilon  \le v.$  Hence, $v^*$ is unique. $\Box$

\begin{props} \label{p2}  Assume (A) and (U1)-(U3). Let $f^*\in\Phi$  realise the maximum 
in (\ref{bell}).
If there exists $\epsilon_{f^*}>0$ such that 
$\mathcal{T}_{f^*}^{(2)}{\bf 0}(s)\ge \epsilon_{f^*}$, then $f^*$ is an optimal stationary policy.
\end{props}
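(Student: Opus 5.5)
Under (A) and (U1)--(U3), I will follow the route of Theorem \ref{thm3}, replacing the lower bound $u\ge b_0>0$ by the two ``two-step'' lower bounds $\epsilon$ and $\epsilon_{f^*}$.

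\emph{Step 1: the Bellman equation and $U(f^*)$.} By Proposition \ref{p1}, the Bellman equation has a unique solution $v^*\in\widehat{\mathcal{A}}$. Under (U1)--(U3), the iteration from $B$ as in Theorem \ref{thm1u}(a) gives an upper semicontinuous fixed point. That fixed point is $\ge\epsilon$, because $\mathcal{T}^{(2)}{\bf 0}\le\mathcal{T}^{(2)}v=v$. So it coincides with $v^*$, and Lemma \ref{l1}(b) yields the maximiser $f^*$, with $v^*=\mathcal{T}_{f^*}v^*$. Next I identify $U(f^*)$. Since $p\in(0,1)$, the sequence $U_n(f^*)=\mathcal{T}_{f^*}^{(n)}{\bf 0}$ is non-decreasing by Lemma \ref{nierutil}(a), and $U(f^*)$ satisfies the Koopmans equation (\ref{fpp1}). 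It is bounded above by $B$, and from $n=2$ on it is bounded below by $\epsilon_{f^*}$. Thus both $U(f^*)$ and $v^*$ are fixed points of $\mathcal{T}_{f^*}$ with values in $[\epsilon',B]$, where $\epsilon'=\min(\epsilon,\epsilon_{f^*})$. The strong subhomogeneity of $\mathcal{T}_{f^*}$ on that class (Theorem \ref{thm4}, with $\epsilon$ replaced by $\epsilon'$) then gives $U(f^*)=v^*$.

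\emph{Step 2: optimality.} Let $\pi=(f_n)\in\Pi_M$; by Remark \ref{all} Markov policies suffice. Since $v^*\ge{\bf 0}$ and $v^*\ge\mathcal{T}_fv^*$ for every $f\in\Phi$, monotonicity (Lemma \ref{monotlem}) gives
$$U_n(\pi)=\mathcal{T}_{f_1}\cdots\mathcal{T}_{f_n}{\bf 0}\le\mathcal{T}_{f_1}\cdots\mathcal{T}_{f_n}v^*\le v^* .$$
Letting $n\to\infty$ gives $U(\pi)\le v^*=U(f^*)$, so $f^*$ is optimal.

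\emph{Main obstacle.} The delicate point is applying $\mathcal{T}_{f}$ to ${\bf 0}$ or to functions that may touch $0$. This requires $M(\cdot)>0$, or the convention (\ref{T0}) together with $0^p=0$ for $p>0$, for the monotonicity comparison ${\bf 0}\le v^*$ to make sense. I would handle this by checking directly that, for $p\in(0,1)$, $y\mapsto[(1-\beta)x^p+\beta y^p]^{1/p}$ is non-decreasing on $[0,\infty)$ and that $M$ is monotone. The second issue is that Theorem \ref{thm4} was stated with the bound $\epsilon\le u$. Here I must recheck Lemma \ref{SH} to confirm that only the range $[\epsilon',B]$ of the fixed points, and not $u\ge b_0$, enters the construction of $\psi$. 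This is exactly where the hypothesis $\mathcal{T}^{(2)}_{f^*}{\bf 0}\ge\epsilon_{f^*}$ is used.
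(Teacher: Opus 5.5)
Your architecture is the paper's. Both proofs identify $U(f^*)$ with $v^*$ through uniqueness of the fixed point of $\mathcal{T}_{f^*}$ among functions with values in $[\epsilon',B]$, and then compare an arbitrary Markov policy with $v^*$. Your Step~2, based on ${\bf 0}\le v^*$ and $\mathcal{T}_f v^*\le v^*$, is a valid and slightly leaner substitute for the paper's comparison through $\mathcal{T}^{(k)}B$.

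\emph{The gap.} It is exactly the check you postponed, and that check fails. In the case $p>0$ of Lemma \ref{SH} the lower bound on $v$ plays no role. The bound $\psi(c)>c$ comes from $(1-\beta)u(s,a)^p(1-b^p)\ge(1-\beta)\epsilon^p(1-b^p)$, i.e.\ from $u\ge\epsilon>0$, together with $M(v)\le B$. The range of the fixed points only matters afterwards, in the sandwich $t_0v^*\le v_0\le v^*/t_0$ of the proof of Theorem \ref{thm2}. Under (A) one only has $u\ge 0$, and then $\mathcal{T}_{f^*}$ need not be strongly subhomogeneous on any class. Take $S=\{1,2\}$, a single action, $u(1)=0$, $u(2)=1$ and $q(\cdot|1)=q(\cdot|2)=\delta_2$. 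All hypotheses hold with $\epsilon=\epsilon_{f^*}=(\beta(1-\beta))^{1/p}$, yet $\mathcal{T}_{f^*}(cv)(1)=\beta^{1/p}c\,v(2)=c\,\mathcal{T}_{f^*}v(1)$ for every $v$. The paper's proof asserts the same one-step property (``mimicking the proof of Lemma \ref{SH}''), so you are following it. Still, invoking Theorem \ref{thm4} with $\epsilon$ replaced by $\epsilon'$ does not close the argument.

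\emph{The repair.} Pass to the two-step operator $\mathcal{T}^{(2)}_{f^*}$; this is where the hypothesis $\mathcal{T}^{(2)}_{f^*}{\bf 0}\ge\epsilon_{f^*}$ really enters. Let $\kappa=(1-c^p)^{1/p}$, so that $c+\kappa\le 1$. From $M(cv)\ge cM(v)$ you get $(\mathcal{T}_{f^*}(cv))^p\ge c^p(\mathcal{T}_{f^*}v)^p+\kappa^p(\mathcal{T}_{f^*}{\bf 0})^p$. Since $(x^p+y^p)^{1/p}\ge x+y$ for $p\in(0,1)$, this gives $\mathcal{T}_{f^*}(cv)\ge c\,\mathcal{T}_{f^*}v+\kappa\,\mathcal{T}_{f^*}{\bf 0}$. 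Concavity of $M$ (H\"older) and $M(0)=0$ then give $M(\mathcal{T}_{f^*}(cv))(s,f^*(s))\ge cX(s)+\kappa Y(s)$, where $X(s)=M(\mathcal{T}_{f^*}v)(s,f^*(s))\le B$ and $Y(s)=M(\mathcal{T}_{f^*}{\bf 0})(s,f^*(s))$. Since $(1-\beta)u(s,f^*(s))^p+\beta Y(s)^p=(\mathcal{T}^{(2)}_{f^*}{\bf 0}(s))^p\ge\epsilon_{f^*}^p$, at each $s$ one of two cases holds:
\begin{itemize}
\item[(i)] $(1-\beta)u(s,f^*(s))^p\ge\epsilon_{f^*}^p/2$; then the computation of Lemma \ref{SH} runs with $(1-\beta)\epsilon^p$ replaced by $\epsilon_{f^*}^p/2$;
\item[(ii)] $Y(s)\ge\eta:=(\epsilon_{f^*}^p/(2\beta))^{1/p}$; then $cX+\kappa Y\ge(c+\kappa\eta/B)X$.
\end{itemize}
The smaller of the two resulting constants is a continuous $\psi(c)\in(c,1)$ with $\mathcal{T}^{(2)}_{f^*}(cv)\ge\psi(c)\,\mathcal{T}^{(2)}_{f^*}v$ for all Borel $v$ with $0\le v\le B$. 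The argument of Theorem \ref{thm2} then applies to $\mathcal{T}^{(2)}_{f^*}$. Every fixed point $w$ of $\mathcal{T}_{f^*}$ with $0\le w\le B$ is a fixed point of $\mathcal{T}^{(2)}_{f^*}$ and satisfies $w\ge\mathcal{T}^{(2)}_{f^*}{\bf 0}\ge\epsilon_{f^*}$. Hence $U(f^*)=v^*$, and your Step~2 completes the proof.

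This also makes your appeal to Proposition \ref{p1} unnecessary; note that its uniqueness proof goes through Lemma \ref{SH} as well. The same reasoning gives $w=U(f^*)\ge U(\pi)$ for any Bellman solution $0\le w\le B$ that $f^*$ maximises.
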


\noindent {\bf Proof } From Theorem \ref{thm1u} we know that $v^*\in\widehat{\mathcal{U}}.$ 
Let  $f^*\in\Phi$  such that $\mathcal{T}_{f^*}v^*=\mathcal{T}v^*=v^*.$
Then,  $U(f^*)=\mathcal{T}_{f^*}U(f^*)$ and $U(f^*)\ge \epsilon_{f^*}.$ 
Mimicking the proof of Lemma \ref{SH} we infer that the operator
$\mathcal{T}_{f^*}$ is strongly subhomogeneous in the class of measurable 
functions with values in $[\epsilon_{f^*},B].$ 
Then, $U(f^*)$ is unique in the class of Borel measurable functions with values in $[\epsilon_{f^*},B].$
From (A) we infer that $U(f^*)$ is unique in the class of Borel measurable functions with values in $[0,B].$
The remaining part goes as in the proof of Theorem \ref{thm3}. $\Box$  \\

Above we take  the second iteration of $\mathcal T$ 
and $\mathcal{T}_{f^*}$ on the zero function. However, all conclusions remain valid
if take the $\ell$-th iteration of $\mathcal T$ 
and $\mathcal{T}_{f^*}$ on the zero function with $\ell\in\N.$

\section{ Concluding remarks} \label{sec6}

According to the terminology introduced in \cite{mm}, 
a function $W:\mathbb{R}_+\times \mathbb{R}_+ \to 
\mathbb{R}_+$ is  a Blackwell aggregator if 
$$|W(x,y_1)-W(x,y_2)|\le \beta |W(x,y_1)-W(x,y_2)|\ \  \
\mbox{for all}\ \   x, \ 
y_1, \ y_2 \in \mathbb{R}_+,$$
where $0<\beta <1,$ and $W$ is a Thompson aggregator
if $W$ is concave at 0, that is,
$$W(x,\alpha y) \ge \alpha W(x,y) +(1-\alpha)W(x,0)
\ \  \
\mbox{for all}\ \   x, 
 \ y \in \mathbb{R}_+,\ \ 0<\alpha <1,$$
and $W(x,0)>0 $ for  each $x>0.$
This terminology is motivated by the works of Blackwell \cite{black}  and  Thompson \cite{tho}. 

The \emph{CES} aggregator  is of Blackwell-type, if $p>1.$   
This case is relatively simple and boils down to the Banach contraction mapping principle. 
Therefore, we do not consider it.  If $p\in (0,1),$
then $W$ is of Thompson-type and the Banach contraction mapping principle 
cannot be applied  to study solutions to the Bellman equation. 

The fact that elasticity of intertemporal substitution (\emph{EIS}) 
is $\frac1 {1-p}$ under the \emph{CES} aggregator is significant.
The quantity \emph{EIS} can be derived from data using regression and other methods. 
These estimates vary and suggest the values for $p$ between $-1$ and $-0.25.$ In this matter, 
consult with comments and further remarks on p. 210 in \cite{ss25}.
Here, we are particularly interested in the case with $p<0.$ 
There are relatively a small number of 
results in the literature that deal with $p<0$  within  dynamic programming framework. For example, 
Ren and Stachurski \cite{rens} consider the \emph{CES} aggregator  with  $p<0$ but using a different
certainty equivalent, induced by the power function.  
It should be noted however 
that for negative values of $p$ the aggregator $W$ is neither Blackwell-type nor Thompson-type.

In order to prove the existence of a solution to the Bellman equation we apply the value iteration algorithm, 
the well-known technique  used in  dynamic programming \cite{bs,schal}. It produces a monotone sequence of
value functions. Then, the pointwise limit constitutes the desired solution. 
To prove uniqueness of solution to the Bellman  and to the Koopmans equation for any stationary policy, 
we  introduce the property of strongly subhomogeneous operators used in \cite{krasno} and \cite{mmtarski} 
for the theory of monotone operators. 

The existence and uniqueness of solution to the Bellman and Koopmans equations for general  Thompson aggregators
$W$ were examined in \cite{mm,mmtarski}. For example, Marinacci and Montrucchio in Proposition 22 in \cite{mmtarski} consider the Bellman equation of the form
\begin{equation}
\label{specialbe}
 v(s)= \max_{a\in A(s)}W(u(s,a),v(a)),
\end{equation} 
 where $u\ge \epsilon>0.$  Therorem 1 in \cite{mm}
is a related result to ours on recursive utilities and is devoted to a stochastic model. 
Equation (\ref{specialbe}) is a special case of our equation (\ref{bell}) and concerns
a deterministic decision model with a specific  transition function
$g(s,a)= a \in A \subset S.$
The results in \cite{balb} and \cite{bv,bvv} are based on different assumptions and have no implications
to our model. For example,   Bloise et al. \cite{bvv} deal with very general monotone and continuous aggregators 
and also embrace a stochastic case. But to get uniqueness
of  a solution to the Bellman equation, 
they impose Assumption 5 that the solution to the Koopmans equation is unique 
for any stationary policy, see Proposition 1 in \cite{bvv}. 
In our paper, this property is a consequence of mild
conditions on the primitive data. 

We also emphasise that Du's theorem  \cite{du} on a fixed point 
for concave/convex operators exploited in \cite{rens},
has no practical use  in our model. The function $\phi(x)=(1+ x^p)^{1/p}$ is strictly concave 
on $\mathbb{R}_+$ for $p <1,$  $p\not= 0,$ 
and $M(v)(s,a)$ is concave in $v$, see \cite{fs}.
Since the supremum operation is subadditive, we cannot claim that $\mathcal{T}v$ is  concave or convex  
with respect to $v$ and  apply the fixed point theorem of Du \cite{du}.

\section{Appendix} \label{appendix}

Appendix is divided into four parts. Appendix A contains the proof of monotone properties  
of recursive utilities defined in the finite time horizon. 
Appendix B collects essential facts on dynamic programming operators, whereas in Appendix C we show that
the dynamic programming operator is strongly subhomogeneous.
We also prove that this property is fundamental and implies uniqueness of a solution to the Bellman equation. 
Appendix D is devoted to uniqueness of a solution to the Koopmans equation, which in turn implies  existence 
of a stationary optimal policy. 

 \subsection{Appendix A}
\begin{lema}\label{monotlem} Let $f\in F.$
If $M(v_i)(s,f(s))>0$ for  $i= 1, 2$ 
and  $s\in S,$ 
and $v_1\le v_2,$ then ${\cal T}_{f}v_1 \le  {\cal T}_{f}v_2.$ 
\end{lema}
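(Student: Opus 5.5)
The monotonicity of $\mathcal{T}_f$ will follow by writing $\mathcal{T}_f v(s)$ as a composition of monotone maps and checking that each step preserves order, with $s\in S$ fixed and $a=f(s)$.

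\emph{Step 1: monotonicity of $M$.} From $v_1\le v_2$ and $r<0$ we get $e^{rv_1(s')}\ge e^{rv_2(s')}$ for all $s'\in S$. Integrating against $q(\cdot|s,a)$ gives
$$\int_S e^{rv_1(s')}q(ds'|s,a)\ge \int_S e^{rv_2(s')}q(ds'|s,a)>0 .$$
Both integrals are finite because $v_i$ are bounded. Since $\ln$ is increasing and multiplying by $1/r<0$ reverses the inequality, it follows that $M(v_1)(s,a)\le M(v_2)(s,a)$.

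\emph{Step 2: the CES outer map.} Set $y_i:=M(v_i)(s,a)$. By hypothesis $0<y_1\le y_2$, and this positivity is what makes $y_i^p$ meaningful when $p<0$. Put $x:=u(s,a)\ge b_0>0$, and consider $\phi(y):=\left[(1-\beta)x^p+\beta y^p\right]^{1/p}$ on $(0,\infty)$. We show $\phi$ is nondecreasing in $y$.

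If $p\in(0,1)$, then $y\mapsto y^p$ is increasing, so $(1-\beta)x^p+\beta y^p$ is nondecreasing in $y$ and positive. The map $t\mapsto t^{1/p}$ is increasing on $(0,\infty)$, so $\phi$ is nondecreasing.

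If $p<0$, then $y\mapsto y^p$ is decreasing, so the bracket $(1-\beta)x^p+\beta y^p$ is nonincreasing in $y$. It is also positive, since $x^p>0$ and $y^p>0$. The map $t\mapsto t^{1/p}$ is decreasing on $(0,\infty)$, so composing two order reversals again gives a nondecreasing $\phi$.

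Alternatively, one can note that $\partial\phi/\partial y=\beta y^{p-1}[\cdot]^{1/p-1}>0$ in both cases. (The degenerate case $\beta=0$ is trivial.)

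\emph{Step 3: conclusion.} Combining the two steps, $\mathcal{T}_f v_1(s)=\phi(y_1)\le\phi(y_2)=\mathcal{T}_f v_2(s)$ for every $s$.

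The only delicate point is the sign bookkeeping for $p<0$ together with $r<0$, each of which reverses inequalities twice. The hypothesis $M(v_i)>0$ is exactly what keeps the powers well defined.
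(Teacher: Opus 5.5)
Your proof is correct and follows the same route as the paper. You first show that $M$ is monotone, which the paper asserts without proof and you justify using $r<0$ and the monotonicity of $\ln$; you then observe that for $p<0$ the order reversals of $y\mapsto y^p$ and $t\mapsto t^{1/p}$ cancel, while for $p\in(0,1)$ both maps are increasing, a case the paper simply calls obvious.
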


\noindent{\bf Proof }
Note that, if  $v_1\le v_2,$ then 
$$M(v_1)(s,f(s)) \le M(v_2)(s,f(s)) .$$ 
For $p<0$  we have  
\begin{eqnarray*}	\lefteqn{(1-\beta)u(s,f(s))^p+\beta [M(v_1)(s,f(s))]^p \ge }\\&&
(1-\beta)u(s,f(s))^p+ \beta[M(v_2)(s,f(s))]^p,\quad s\in S. 
\end{eqnarray*}
Since $1/p <0$, we conclude that 
$
{\cal T}_{f}v_1(s)  \le {\cal T}_{f}v_2(s)
$
for all $s\in S.$ 
For $p>0$ the assertion is obvious.
$\Box$


\begin{lema}\label{nierutil} Let $\pi=(f_n)\in\Pi_M.$  For all $m\in\mathbb{N}$ and $s_1\in S,$ it holds\\
$(a)$ $U_{m}(\pi)(s_1) \ge U_{m+1}(\pi)(s_1)\ge \epsilon$, when $p<0$ and \\
$(b)$ $U_{m}(\pi)(s_1) \le U_{m+1}(\pi)(s_1)\le B$, when $p>0.$ 
\end{lema}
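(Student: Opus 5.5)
The plan is to argue by induction on $m$, using the monotonicity of Lemma \ref{monotlem} and the invariance of $[\epsilon,B]$ from Remark \ref{stala}. Write $\pi^k=(f_k,f_{k+1},\dots)$. By definition (\ref{utn}),
$$U_m(\pi)(s_1)=\mathcal{T}_{f_1}\cdots\mathcal{T}_{f_m}{\bf 0}(s_1),\qquad U_{m+1}(\pi)(s_1)=\mathcal{T}_{f_1}\cdots\mathcal{T}_{f_m}\bigl(\mathcal{T}_{f_{m+1}}{\bf 0}\bigr)(s_1).$$
The two compositions differ only in the innermost argument, which is ${\bf 0}$ versus $\mathcal{T}_{f_{m+1}}{\bf 0}$. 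I therefore compare these innermost functions and propagate the inequality outward through the monotone operators $\mathcal{T}_{f_k}$.

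\emph{Base comparison.} I first check the formula in (\ref{Top}) when the argument is the zero function. Since $M({\bf 0})=0$, the term $\beta\cdot 0^p$ is harmless for $p>0$, and (\ref{Top}) agrees with (\ref{T0}). For $p<0$ the expression $0^p$ is infinite, which is exactly why (\ref{T0}) defines $\mathcal{T}_{f}{\bf 0}$ separately. So the innermost step in $U_m$ is $\mathcal{T}_{f_m}{\bf 0}(s)=(1-\beta)^{1/p}u(s,f_m(s))$. In $U_{m+1}$ the corresponding step is $\mathcal{T}_{f_m}w(s)$ with $w:=\mathcal{T}_{f_{m+1}}{\bf 0}=(1-\beta)^{1/p}u(\cdot,f_{m+1}(\cdot))$, a function with values in $[\epsilon,B]$ by Remark \ref{stala}. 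Since $w>0$ and $r<0$, we get $M(w)\ge \inf w>0$, so this step is well defined. I then compare $\mathcal{T}_{f_m}w$ with $\mathcal{T}_{f_m}{\bf 0}$ directly.

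For $p>0$ we have
$$\mathcal{T}_{f_m}w=\bigl[(1-\beta)u^p+\beta M(w)^p\bigr]^{1/p}\ge\bigl[(1-\beta)u^p\bigr]^{1/p}=\mathcal{T}_{f_m}{\bf 0}.$$
For $p<0$, adding the positive term $\beta M(w)^p$ inside the bracket and then raising to the negative power $1/p$ reverses the inequality, giving $\mathcal{T}_{f_m}w\le \mathcal{T}_{f_m}{\bf 0}$. This establishes the comparison at depth one.

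\emph{Propagation.} Next I apply $\mathcal{T}_{f_{m-1}},\dots,\mathcal{T}_{f_1}$ successively, using Lemma \ref{monotlem} at each step. Its positivity hypothesis needs every intermediate function to lie in $[\epsilon,B]$, which is the invariance asserted in Remark \ref{stala}. To verify it, I will show that $\mathcal{T}_f$ maps $[\epsilon,B]$-valued functions into $[\epsilon,B]$:
\begin{itemize}
\item For $p<0$, $(1-\beta)u^p+\beta y^p\le(1-\beta)b_0^p+\beta\epsilon^p$. Raising to the power $1/p$ gives a lower bound of at least $\min(b_0,\epsilon)$, and this is $\ge\epsilon$ provided $\epsilon\le b_0$.
\item Still for $p<0$, since $y^p>0$, the result is at most $((1-\beta)u^p)^{1/p}\le(1-\beta)^{1/p}b_1\le B$.
\item For $p>0$ the argument is symmetric: the result is at most $\max(b_1,B)=B$ and at least $(1-\beta)^{1/p}b_0\ge\epsilon$.
\end{itemize}
This yields $U_{m+1}\le U_m$ for $p<0$ and $U_{m+1}\ge U_m$ for $p>0$, and at the same time $U_m(\pi)\in[\epsilon,B]$, which gives the stated bounds.

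The main obstacle is the first step in the case $p<0$, because ${\bf 0}$ is not in the domain where Lemma \ref{monotlem} applies. So I cannot invoke monotonicity to deduce $\mathcal{T}_{f_m}w\le\mathcal{T}_{f_m}{\bf 0}$ from $w\ge{\bf 0}$; it has to be checked by hand, as above, using the convention (\ref{T0}). A second point to handle is that the constants $\epsilon,B$ must be fixed compatibly with $b_0,b_1$ and $p$, as in Remark \ref{stala}.
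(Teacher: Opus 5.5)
Your proof is correct and is essentially the paper's argument. The paper inducts on $m$ through the shifted policy $\pi'=(f_2,f_3,\dots)$, writing $U_{m+1}(\pi)=\mathcal{T}_{f_1}U_m(\pi')$: its base case is your by-hand depth-one comparison and its induction step is one application of Lemma \ref{monotlem}, so unrolled it is exactly your innermost comparison plus outward propagation. Your explicit check that $\mathcal{T}_f$ preserves $[\epsilon,B]$-valued functions, given $\epsilon\le b_0$ and $(1-\beta)^{1/p}b_1\le B$, makes precise what the paper uses implicitly when it bounds $u$ below by $\epsilon$.
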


\noindent{\bf Proof }  $(a)$ We use an induction argument. Let $\pi' =(f_2,f_3,...).$  Since $p<0$, then for $m=1$
$$U_1(\pi)(s_1)=(1-\beta)^{\frac 1p}u(s_1,f_1(s_1))\ge (1-\beta)^{\frac 1p} \epsilon>\epsilon$$
and since 
$(1-\beta)u(s_1,f_1(s_1))^p+\beta(M(U_1(\pi'))(s_1))^p \ge (1-\beta)u(s_1,f_1(s_1))^p,$ we have
$$
U_2(\pi)(s_1)=[(1-\beta)u(s_1,f_1(s_1))^p+\beta(M(U_1(\pi')))(s_1))^p  ]^{\frac 1p}   \le
 (1-\beta)^{\frac 1p}u(s_1,f_1(s_1))=U_1(\pi)(s_1).$$
Now we show the induction step.
Suppose that 
$U_{m}(\pi')(s_2) \ge U_{m+1}(\pi')(s_2)\ge \epsilon$ for all $s_2\in S.$ Then
$$M(U_m(\pi'))(s_1,f_1(s_1)) \ge M(U_{m+1}(\pi'))(s_1,f_1(s_1)) \ge \epsilon.  $$ 
By Lemma \ref{monotlem} we get
$$
U_{m+1}(\pi)(s_1)= {\cal T}_{f_1}U_{m}(\pi')(s_1)
\ge {\cal T}_{f_1}U_{m+1}(\pi')(s_1)= U_{m+2}(\pi)(s_1)
$$
for all $s_1\in S.$  Moreover, since both  functions $y\to y^p$ and $y\to y^{1/p}$ are decreasing, we get
$$U_{m+2}(\pi)(s_1)={\cal T}_{f_1}U_{m+1}(\pi')(s_1)\ge ((1-\beta)\epsilon^p+\beta\epsilon^p)^{\frac 1p}=\epsilon.$$
The proof of $(b)$ for $p>0$ is simpler. Clearly,
$$U_1(\pi)(s_1)=(1-\beta)^{\frac 1p}u(s_1,f_1(s_1))\le (1-\beta)^{\frac 1p} B<B$$
and $$U_2(\pi)(s_1)=[(1-\beta)u(s_1,f_1(s_1))^p+\beta(M(U_1(\pi'))(s_1))^p]^{\frac 1p} \le 
(1-\beta)^{\frac 1p}u(s_1,f_1(s_1))=U_1(\pi)(s_1).$$
The remaining part is omitted. It is a consequence of an induction argument
and Lemma \ref{monotlem}. $\Box$

 \subsection{Appendix B}
 
We start with some useful facts on upper and lower semianalytic functions.
For a proof of Lemma \ref{A1} consult with Proposition 7.48 in \cite{bs}.
 
\begin{lema} \label{A1} Assume that $g: S\to\R$ is bounded and upper (lower) semianalytic. 
Then $(s,a)\to \int_S g(s')q(ds'|s,a)$ is upper (lower) semianalytic on $\mathbb{K}.$ \end{lema}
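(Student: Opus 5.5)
The plan is to reduce Lemma \ref{A1} to the standard measurable-selection/semianalytic machinery of Chapter 7 in \cite{bs}, specifically Proposition 7.48 there, which concerns integrals of semianalytic functions against Borel stochastic kernels. Concretely, I will view $q$ as a Borel measurable stochastic kernel on $S$ given $\mathbb{K}$, where $\mathbb{K}$ is a Borel subset of $S\times A$ and hence itself a Borel space. I will then apply the proposition to the function $G:\mathbb{K}\times S\to\R$ defined by $G((s,a),s'):=g(s')$. The result should give that $(s,a)\mapsto\int_S G((s,a),s')q(ds'|s,a)$ is upper semianalytic on $\mathbb{K}$.

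The key steps, in order, are as follows.

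\begin{enumerate}
\item Check that $G$ is upper semianalytic on the Borel space $\mathbb{K}\times S$. For $c\in\R$ we have
$$\{((s,a),s'): G((s,a),s')>c\}=\mathbb{K}\times\{s': g(s')>c\}.$$
This is the product of a Borel set and an analytic set. It is therefore analytic, since products of analytic sets are analytic and Borel sets are analytic.
\item Check the hypotheses on the kernel. Borel measurability of $(s,a)\mapsto q(B|s,a)$ for each $B\in\mathcal{B}(S)$ is item (iv) of the model.
\item Check that the integral is well defined. Because $g$ is bounded and universally measurable, the integral $\int_S g\,dq(\cdot|s,a)$ is well defined and finite. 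So the extended-real conventions in \cite{bs} cause no trouble.
\end{enumerate}

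Applying Proposition 7.48 in \cite{bs} then gives the upper semianalytic case.

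For the lower semianalytic case I will pass to $-g$. If $g$ is bounded and lower semianalytic, then $-g$ is bounded and upper semianalytic, because $\{-g>c\}=\{g<-c\}$ is analytic. The first part then shows that $(s,a)\mapsto\int_S(-g)\,dq$ is upper semianalytic. By linearity of the integral for bounded functions, $(s,a)\mapsto\int_S g\,dq$ is lower semianalytic.

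The only delicate point is the first step: making sure the product of an analytic set with a Borel set is analytic in $\mathbb{K}\times S$, and that the cited proposition applies on a Borel subset $\mathbb{K}$ rather than a whole product space. I would handle both by noting that $\mathbb{K}$ is a Borel space in its own right. A second route is to apply the proposition on $S\times A$ with $q$ extended arbitrarily (for example by a fixed probability measure) outside $\mathbb{K}$, and then restrict to $\mathbb{K}$. Restricting an upper semianalytic function to a Borel subset keeps it upper semianalytic, since the intersection of an analytic set with a Borel set is analytic.
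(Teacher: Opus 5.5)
Your proposal is correct and takes the same route as the paper, which proves the lemma simply by citing Proposition 7.48 in \cite{bs}; your steps (treating $\mathbb{K}$ as a Borel space, lifting $g$ to $G((s,a),s')=g(s')$, checking the kernel, and passing between the two cases via $g\mapsto -g$) are exactly the details needed to apply it. One small point: in \cite{bs} that proposition is stated for lower semianalytic functions, so it gives the lower case directly, and your negation argument (valid because $g$ is bounded) supplies the upper case rather than the other way round.
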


\begin{lema} \label{A2} Assume that $v\in \widehat{\mathcal{A}}.$
Then $(s,a)\to H(s,a,v) $
is upper semianalytic and $H(s,a,v)\in [\epsilon, B]$
for every $(s,a)\in\K.$  Moreover,  $\mathcal{T}v\in\widehat{\mathcal{A}}.$
\end{lema}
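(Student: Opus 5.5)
The plan has three steps: show that $M(v)$ is upper semianalytic, show that $H(\cdot,\cdot,v)$ is upper semianalytic with values in $[\epsilon,B]$, and then pass to the supremum over actions.

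\emph{Step 1: semianalyticity of $M(v)$.} Fix $v\in\widehat{\mathcal{A}}$. Since $r<0$, the map $y\mapsto e^{ry}$ is continuous and decreasing. Hence $e^{rv}$ is bounded, with values in $[e^{rB},e^{r\epsilon}]$, and it is lower semianalytic: $\{e^{rv}<c\}=\{v>\ln c/r\}$ is analytic. By Lemma \ref{A1}, the function $(s,a)\mapsto\int_S e^{rv(s')}q(ds'|s,a)$ is lower semianalytic on $\K$, with values in $[e^{rB},e^{r\epsilon}]$. Composing with the decreasing continuous map $x\mapsto \frac1r\ln x$ turns lower semianalytic into upper semianalytic, so $M(v)$ is upper semianalytic. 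Since $v\in[\epsilon,B]$, monotonicity of the certainty equivalent also gives $\epsilon\le M(v)(s,a)\le B$.

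\emph{Step 2: semianalyticity and bounds for $H$.} Write $H(s,a,v)=\Phi(u(s,a),M(v)(s,a))$ with $\Phi(x,y)=[(1-\beta)x^p+\beta y^p]^{1/p}$. The function $\Phi$ is continuous on $(0,\infty)^2$ and nondecreasing in each argument, for either sign of $p$ (as in Lemma \ref{monotlem}). Now $u$ is Borel and $M(v)$ is upper semianalytic, so for any $c$ the set $\{(s,a):\Phi(u,M(v))>c\}$ can be written as the union over rationals $q$ of $\{M(v)>q\}\cap\{(s,a):\Phi(u(s,a),q)>c\}$. Here the rationals come from monotonicity and continuity in $y$, by approximating $M(v)$ from below. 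This is a countable union of intersections of analytic sets with Borel sets, hence analytic. Alternatively, I would cite Lemma 7.30(4) in \cite{bs}: a Borel function that is nondecreasing in an upper semianalytic argument preserves upper semianalyticity. The bounds then come from monotonicity. On one side, $H\le\Phi(b_1,B)$ and $H\ge\Phi(b_0,\epsilon)$. On the other, the constants are chosen as in Remark \ref{stala}, namely $B\ge\max\{b_1,(1-\beta)^{1/p}b_1\}$ and $\epsilon\le\min\{b_0,(1-\beta)^{1/p}b_0\}$. Therefore $\Phi(b_0,\epsilon)\ge\Phi(\epsilon,\epsilon)=\epsilon$ and $\Phi(b_1,B)\le\Phi(B,B)=B$.

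\emph{Step 3: the supremum.} Finally, $\mathcal{T}v(s)=\sup_{a\in A(s)}H(s,a,v)$ is upper semianalytic by Proposition 7.47 in \cite{bs}, because $\K$ is Borel (hence analytic) and $H(\cdot,\cdot,v)$ is upper semianalytic on $\K$. Indeed, $\{s:\mathcal{T}v(s)>c\}$ is the projection of the analytic set $\{(s,a)\in\K:H(s,a,v)>c\}$. The bounds $[\epsilon,B]$ pass to the supremum, so $\mathcal{T}v\in\widehat{\mathcal{A}}$.

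I expect the main obstacle to be the sign bookkeeping. The exponential with $r<0$ flips upper to lower semianalytic, and the logarithm divided by $r$ flips it back. For $p<0$, the maps $y^p$ and $(\cdot)^{1/p}$ each reverse order, and one must check that the composite is still nondecreasing so that upper semianalyticity is preserved.
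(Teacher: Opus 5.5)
Your proposal is correct and follows essentially the same route as the paper. You show that $e^{rv}$ is lower semianalytic, apply Lemma \ref{A1}, and use the decreasing map $y\mapsto\frac1r\ln y$ to get $M(v)$ upper semianalytic; you then compose monotonically to get $H$ and use the projection argument (7.47 in \cite{bs}) for $\mathcal{T}v$. The only difference is cosmetic: you treat both signs of $p$ at once through the monotonicity and continuity in $y$ of $(x,y)\mapsto[(1-\beta)x^p+\beta y^p]^{1/p}$, whereas the paper tracks the order reversals of $y\mapsto y^p$ and $y\mapsto y^{1/p}$ case by case; in your countable union, take $q$ to range over the positive rationals so that $q^p$ is defined.
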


\noindent{\bf Proof }  First we note that the sum of a Borel measurable function and an upper 
(a lower) semianalytic function is upper (lower) semianalytic.  
We  now show that $M(v)(s,a) =\frac{1}{r}\ln \int_S e^{rv(s')}q(ds'|s,a)$ is upper semianalytic.  
Since $r<0,$ the function $e^{rv(s')}$ is lower semianalytic.  
By Lemma \ref{A1}, the function $\int_S e^{rv(s')}q(ds'|s,a)$
is lower semianalytic on $\mathbb{K}.$ Since $y\to \frac{1}{r}\ln y$ is decreasing, it follows that 
$M(v)(s,a)$  is upper semianalytic on $\mathbb{K}.$

If $p\in (0,1),$ then the functions $y\to y^p$ and
$y\to y^{\frac{1}{p}}$ are increasing. Therefore,
$$(s,a)\to ((1-\beta)u(s,a)^p +\beta [M(v)(s,a)]^p)$$ is upper semianalytic,  and consequently,
$$H(s,a,v)=\left((1-\beta) u(s,a)^p+\beta [M(v)(s,a)]^p\right)^{\frac{1}{p}}$$
is upper semianalytic on $\mathbb{K}.$ 
The second part is obvious for $p\in (0,1)$, since $M(v)(s,a)\in [\epsilon, B]$
for all $(s,a)\in\K.$

If $p<0,$ then the function  
$y\to y^p$ is decreasing. Hence 
$$(s,a)\to ((1-\beta)u(s,a)^p +\beta [M(v)(s,a)]^p)$$
is lower semianalytic. But  $y\to y^{\frac{1}{p}}$ is also 
decreasing. Therefore,
$$H(s,a,v)=\left((1-\beta) u(s,a)^p+\beta [M(v)(s,a)]^p\right)^{\frac{1}{p}}$$
is upper semianalytic.  
For the second part note that $[M(v)(s,a)]^p\in [B^p,\epsilon^p]$ and $u(s,a)^p\in [B^p,\epsilon^p]$
for all $(s,a)\in\K.$ 

By Lemma 7.47 in \cite{bs}, it follows that  $\mathcal{T}v$ is upper semianalytic. $\Box$ \\

If  continuity-compactness assumptions on the primitive data are satisfied,
then our results can be strengthened.    

\begin{lema}\label{l3} Assume that (U1)-(U3) hold.
If $v\in\widehat{\mathcal{U}},$ then the function 
$(s,a)\to H(s,a,v)$ is upper semicontinuous.  Moreover, $\mathcal{T}v\in \widehat{\mathcal{U}}.$
\end{lema}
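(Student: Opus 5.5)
We split Lemma \ref{l3} into the upper semicontinuity of the map $(s,a)\to M(v)(s,a)$, then that of $(s,a)\to H(s,a,v)$ (treating $p\in(0,1)$ and $p<0$ separately), and finally apply Lemma \ref{l1}(b).

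\emph{Step 1: $M(v)$ is upper semicontinuous.} Let $v\in\widehat{\mathcal{U}}$. Since $r<0$, the function $s'\to e^{rv(s')}$ is lower semicontinuous and takes values in $[e^{rB},e^{r\epsilon}]$. By Baire's theorem (e.g. Lemma 2.41 in \cite{ab} or its analogue for lower semicontinuous functions on a metric space) there is a nondecreasing sequence of bounded continuous functions $g_k\uparrow e^{rv}$. By (U3), each map $(s,a)\to\int_S g_k(s')q(ds'|s,a)$ is continuous on $\K$. By monotone convergence, $(s,a)\to\int_S e^{rv(s')}q(ds'|s,a)$ is their increasing limit, hence lower semicontinuous, with values in $[e^{rB},e^{r\epsilon}]$. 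Since $y\to\frac1r\ln y$ is continuous and decreasing on $(0,\infty)$, $M(v)$ is upper semicontinuous on $\K$ with values in $[\epsilon,B]$.

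\emph{Step 2: $H(\cdot,\cdot,v)$ is upper semicontinuous.} We follow the monotonicity bookkeeping of Lemma \ref{A2}, using that composition of an upper (lower) semicontinuous function with a continuous increasing map on the relevant range is again upper (lower) semicontinuous, with a decreasing map swapping the two. If $p\in(0,1)$, then $y\to y^p$ and $y\to y^{1/p}$ are increasing and continuous on $[\epsilon,B]$ and on the positive reals. Hence $(1-\beta)u^p+\beta[M(v)]^p$ is upper semicontinuous, being a sum of upper semicontinuous functions by (U2), and so is $H$. If $p<0$, both power maps are decreasing and continuous on $(0,\infty)$. Then $(1-\beta)u^p+\beta[M(v)]^p$ is lower semicontinuous and positive, and applying $y\to y^{1/p}$ returns an upper semicontinuous $H$. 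The bounds $H(s,a,v)\in[\epsilon,B]$ come from Lemma \ref{A2}, or equivalently from Remark \ref{stala}.

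\emph{Step 3: conclusion.} We apply Lemma \ref{l1}(b) with $w=H(\cdot,\cdot,v)$, which is bounded, Borel (being upper semicontinuous) and upper semicontinuous on $\K$; (U1) supplies compactness of $A(s)$ and upper semicontinuity of $s\to A(s)$. This gives that $\mathcal{T}v(s)=\max_{a\in A(s)}H(s,a,v)$ is attained and upper semicontinuous. The values lie in $[\epsilon,B]$ because each $H(s,a,v)$ does, so $\mathcal{T}v\in\widehat{\mathcal{U}}$.

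The main obstacle is Step 1: the Feller condition (U3) is stated only for continuous $v$, so it must be extended to the semicontinuous integrand by monotone approximation with continuous functions. Care is also needed to keep track of the direction of semicontinuity across the sign changes caused by $r<0$ and, when $p<0$, by the two decreasing power maps.
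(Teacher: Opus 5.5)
Your proof is correct and follows the paper's argument step for step. You get upper semicontinuity of $M(v)$ from lower semicontinuity of $(s,a)\to\int_S e^{rv(s')}q(ds'|s,a)$ together with $r<0$, use the same monotonicity bookkeeping for $p\in(0,1)$ and $p<0$, and apply Lemma \ref{l1}(b) for $\mathcal{T}v\in\widehat{\mathcal{U}}$.

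The only difference is in Step 1. The paper simply cites Proposition 7.31 in \cite{bs} for the lower semicontinuity of the integral, while you reprove it via Baire's monotone approximation by bounded continuous functions, (U3) and monotone convergence, which is the standard proof of that proposition. For that step, cite Baire's theorem itself rather than Lemma 2.41 of \cite{ab}, which the paper uses for monotone limits of semicontinuous functions.
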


\noindent{\bf Proof }
Since $r<0,$ the function $s' \to e^{rv(s')}$ is lower semicontinuous. 
By Proposition 7.31 in \cite{bs},  $(s,a)\to\int_Se^{rv(s')}q(ds'|s.a)$ is also lower semicontinuous. 
Hence,  $(s,a)\to \ln \int_Se^{rv(s')}q(ds'|s.a)$ is lower semicontinuous and  
Since $r<0,$  $(s,a)\to  M(v)(s,a)$ is upper semicontinuous.

Assume that $p\in (0,1).$ Then, the functions $y\to y^p$ and $y\to y^{1/p}$ are increasing
and consequently,
$$ (s,a)\to [(1-\beta)u(s,a)^p+ \beta M(v)(s,a)^p]^{1/p}$$
is upper semicontinuous.

If $p<0,$ then the functions $y\to y^p$ and $y\to y^{1/p}$ are decreasing.
Then, $(s,a)\to  M(v)(s,a)^p$ and $(s,a)\to u(s,a)^p$ 
are lower semicontinuous.  
Since $1/p <0,$ we conclude that 
$$ (s,a)\to [(1-\beta)u(s,a)^p+ \beta M(v)(s,a)^p]^{1/p}$$
is upper semicontinuous. 

The second part follows from Lemma \ref{l1}(b).  $\Box$ \\

\begin{lema} \label{l3c} If $v\in\widehat{\mathcal{C}}$, 
and (C1)-(C3) hold, then the function $(s,a)\to H(s,a,v)$ is continuous.  
Moreover, $\mathcal{T}v\in \widehat{\mathcal{C}}.$
  \end{lema}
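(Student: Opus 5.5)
The plan is to follow the proof of Lemma \ref{l3} but use two-sided semicontinuity. Fix $v\in\widehat{\mathcal{C}}$ and establish continuity of $(s,a)\to H(s,a,v)$ on $\K$ in three steps. First, since $v$ is continuous and bounded with values in $[\epsilon,B]$ and $r<0$, the function $s'\to e^{rv(s')}$ is continuous, bounded, and takes values in $[e^{rB},e^{r\epsilon}]\subset(0,1]$. By the Feller property (C3)=(U3), $(s,a)\to\int_S e^{rv(s')}q(ds'|s,a)$ is continuous on $\K$, with values in $[e^{rB},e^{r\epsilon}]$. This interval is bounded away from $0$, so $\ln$ is continuous there. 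Hence $(s,a)\to M(v)(s,a)=\frac1r\ln\int_S e^{rv(s')}q(ds'|s,a)$ is continuous, with values in $[\epsilon,B]$.

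Second, by (C2) $u$ is continuous and $u\ge b_0>0$. The maps $y\to y^p$ and $y\to y^{1/p}$ are continuous on $(0,\infty)$ for every $p\neq0$; no monotonicity case split is needed, unlike in Lemma \ref{l3}. The inner quantity $(1-\beta)u(s,a)^p+\beta[M(v)(s,a)]^p$ is continuous and strictly positive. Composing with $y\to y^{1/p}$ therefore gives continuity of $H(\cdot,\cdot,v)$ on $\K$. By Lemma \ref{A2}, its values lie in $[\epsilon,B]$.

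Third, we show $\mathcal{T}v\in\widehat{\mathcal{C}}$. Here I would invoke Berge's maximum theorem (Theorem 2 on p.~116 in \cite{berge}, or Lemma 17.29 in \cite{ab}). Under (C1) the correspondence $s\to A(s)$ is continuous with non-empty compact values, and $H(\cdot,\cdot,v)$ is continuous on $\K$, so $s\to\max_{a\in A(s)}H(s,a,v)$ is continuous. The supremum in (\ref{tmax}) is attained, and the bounds $\epsilon\le\mathcal{T}v\le B$ are inherited from those of $H$. Therefore $\mathcal{T}v\in\widehat{\mathcal{C}}$.

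The main obstacle is the application of Berge's theorem, because $H$ is defined only on the graph $\K$ rather than on $S\times A$. The standard statements handle a function continuous on the graph of a continuous compact-valued correspondence, so this should go through. If needed, one can extend $H$ continuously: $H$ is bounded and continuous on $\K$, $\K$ is closed in $S\times A$ by upper semicontinuity and compactness of the sections, and Tietze's theorem in the metric space $S\times A$ gives the extension. The maximum theorem then applies directly.
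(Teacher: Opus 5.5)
Your proposal is correct and follows the paper's route. You redo the argument of Lemma \ref{l3} with continuity in place of semicontinuity (Feller property, continuity of $\ln$ on a range bounded away from $0$, and continuity of $y\to y^p$, $y\to y^{1/p}$ on $(0,\infty)$), and then apply Berge's maximum theorem. Two minor points: the paper cites the maximum theorem as Theorem 17.31 in \cite{ab}, whereas Lemma 17.29 there appears to be only the lower-semicontinuity half. Also, that theorem is already stated for functions defined on the graph of the correspondence, so the Tietze extension is unnecessary.
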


\noindent{\bf Proof } The proof proceeds similarly as in Lemma \ref{l3}. The second part is due to
the Maximum Theorem on p. 116 in \cite{berge} or Theorem 17.31 in \cite{ab}. $\Box$

\subsection {Appendix C}

\begin{lema}\label{SH}  The operator $\mathcal T$ is strongly subhomogeneous in the class 
$ \widehat{{ \cal A}}.$  
\end{lema}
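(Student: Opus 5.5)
The plan is to reduce strong subhomogeneity of $\mathcal{T}$ to two pointwise inequalities. The first is a subhomogeneity property of the entropic certainty equivalent $M$. The second is an elementary inequality for the \emph{CES} aggregator that holds uniformly on $\K$. Taking the supremum over $a\in A(s)$ then finishes the argument. Throughout, fix $v\in\widehat{\mathcal{A}}$ and $c\in(0,1)$. The function $cv$ is upper semianalytic with values in $[c\epsilon,cB]$, so $M(cv)>0$ and $H(s,a,cv)$ is well defined as in Lemma \ref{A2}.

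\emph{Step 1: $M(cv)(s,a)\ge cM(v)(s,a)$ on $\K$.} Put $X=e^{rcv(\cdot)}$ and use the convexity of $x\mapsto x^{1/c}$ on $\R_+$. Jensen's inequality for $q(\cdot|s,a)$ gives
$$\left(\int_S e^{rcv(s')}q(ds'|s,a)\right)^{1/c}\le \int_S e^{rv(s')}q(ds'|s,a).$$
Taking logarithms gives $\ln\int_S e^{rcv}\,dq\le c\ln\int_S e^{rv}\,dq$. Dividing by $r<0$ reverses the inequality and yields $M(cv)\ge cM(v)$.

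\emph{Step 2: monotonicity of the aggregator in $y$.} The map $y\mapsto((1-\beta)x^p+\beta y^p)^{1/p}$ is increasing for every $p<1$, $p\neq0$. When $p<0$ this holds because both $y\mapsto y^p$ and $z\mapsto z^{1/p}$ are decreasing, as in Lemma \ref{monotlem}. With $u=u(s,a)$ and $m=M(v)(s,a)\in[\epsilon,B]$, Step 1 therefore gives
$$H(s,a,cv)\ge\left((1-\beta)u^p+\beta c^pm^p\right)^{1/p}=\left(1-t+tc^p\right)^{1/p}H(s,a,v),\qquad t:=\frac{\beta m^p}{(1-\beta)u^p+\beta m^p}\in(0,1).$$

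\emph{Step 3: a uniform bound on $t$ and the choice of $\psi$.} This is the step I expect to be the crux. We need $t\le t_1<1$ uniformly on $\K$, and this is exactly where the bounds $b_0\le u\le b_1$ and $\epsilon\le m\le B$ enter.

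For $p\in(0,1)$, the factor $(1-t(1-c^p))^{1/p}$ is decreasing in $t$. Take
$$t_1=\frac{\beta B^p}{(1-\beta)b_0^p+\beta B^p}<1,\qquad \psi(c):=\left(1-t_1(1-c^p)\right)^{1/p}.$$

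For $p<0$ we have $c^p>1$, and the factor $(1+t(c^p-1))^{1/p}$ is again decreasing in $t$. Take
$$t_1=\frac{\beta\epsilon^p}{(1-\beta)b_1^p+\beta\epsilon^p}<1,\qquad \psi(c):=\left(1+t_1(c^p-1)\right)^{1/p}.$$

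In both cases $\psi$ is continuous and increasing on $(0,1)$, and $\psi(c)<1$. The strict inequality $\psi(c)>c$ holds for the following reasons:
\begin{itemize}
\item for $p>0$, $1-t_1+t_1c^p>c^p$ is equivalent to $(1-t_1)(1-c^p)>0$;
\item for $p<0$, $1+t_1(c^p-1)<c^p$ is equivalent to $(1-t_1)(c^p-1)>0$, and the map $z\mapsto z^{1/p}$ is decreasing.
\end{itemize}

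\emph{Step 4: passing to the supremum.} Step 3 gives $H(s,a,cv)\ge\psi(c)H(s,a,v)$ for all $(s,a)\in\K$. Taking the supremum over $a\in A(s)$ yields $\mathcal{T}(cv)(s)\ge\psi(c)\mathcal{T}v(s)$ for all $s\in S$, which is strong subhomogeneity of $\mathcal{T}$ in $\widehat{\mathcal{A}}$.
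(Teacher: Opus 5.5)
Your proof is correct and follows essentially the same route as the paper: Jensen's inequality gives $M(cv)\ge cM(v)$, the CES aggregator is monotone in its second argument, the bounds on $u$ and on $M(v)\in[\epsilon,B]$ give a uniform pointwise estimate, and you then pass to the supremum over $a\in A(s)$. Your weight $t$ is only a reparametrisation of the paper's search for $b$ with $L\ge R$, and your $\psi$ coincides with (\ref{numer1}) and (\ref{numer2}) once $\epsilon$ is replaced by $b_0$ (for $p>0$) and $B$ by $b_1$ (for $p<0$); this substitution is valid under the model's bounds on $u$ and gives a slightly sharper $\psi$.
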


\noindent{\bf Proof  } Fix $v\in \widehat{\mathcal{A}}$ and $c\in(0,1).$  
Then, from the proof of Lemma \ref{A2} we infer that the functions $(s,a)\to M(v)(s,a)$ and $(s,a)\to M(cv)(s,a)$ 
are upper semianalytic for every $(s,a)\in\K.$
By the Jensen inequality we have 
\begin{equation}\label{ji}
 M(cv)(s,a) =\frac{1}{r}\ln \int_X e^{crv(y)}q(dy|s,a)
\ge cM(v)(s,a)=\frac{c}{r}\ln \int_S e^{rv(y)}q(dy|s,a)
\end{equation}
for every $(s,a)\in\mathbb{K}.$

\underline{Case: $ p>0.$}   Recall that  $y\to y^p$ and $y\to y^{1/p}$ are increasing and it holds
$\epsilon^p  \le [M(v)(s,a) ]^p \le B^p.$ 
Let 
\begin{eqnarray*}
L(s,a)& :=& (1-\beta)u(s,a)^p + \beta c^p[M(v)(s,a)]^p\quad\quad\quad \mbox{and}\\
R(s,a)&:=& (1-\beta)b^pu(s,a)^p + \beta b^p[M(v)(s,a)]^p
=b^p\left((1-\beta) u(s,a)^p + \beta  [M(v)(s,a)]^p\right).
\end{eqnarray*}
We are going to determine $b\in (c,1)$ such that
$$L(s,a) \ge R(s,a) \quad \mbox{for all} \quad (s,a)\in \mathbb{K}.$$
This inequality is equivalent to  
\begin{equation}
\label{1} (1-\beta)u(s,a)^p(1-b^p) \ge \beta(b^p-c^p)[M(v)(s,a)]^p.
\end{equation}
Note that for all $(s,a)\in \mathbb{K},$
$$
(1-\beta)u(s,a)^p(1-b^p) \ge   (1-\beta)\epsilon^p
(1-b^p) $$
and 
$$ 
\beta(b^p-c^p)B^p  \ge \beta(b^p-c^p)[M(v)(s,a)]^p.
 $$
Therefore (\ref{1}) holds, if
$$
(1-\beta)\epsilon^p(1-b^p) \ge 
\beta(b^p-c^p)B^p 
$$ 
or equivalently
$$
\frac{ (1-\beta)\epsilon^p +\beta c^pB^p  }{
 (1-\beta)\epsilon^p +\beta B^p}\ge b^p. 
$$
Take $\psi(c)$ as the maximum possible value of $b$ satisfying the above inequality, i.e.,
\begin{equation}\label{numer1}
b= \psi(c):= \left(
\frac{ (1-\beta)\epsilon^p +\beta c^pB^p  }{
 (1-\beta)\epsilon^p +\beta B^p} \right)^{1/p}.
\end{equation}
Note that $c<\psi(c)<1$ since $c^p < \psi(c)^p <1$ and $p\in (0,1).$
By (\ref{ji}) we have
$$\mathcal{T}(cv)(s) \ge L(s,a)^{\frac{1}{p}} \ge
R(s,a)^{\frac{1}{p}}
$$
for all $(s,a)\in \mathbb{K}.$ 
Hence ${\cal T}(cv)(s) \ge \sup_{a\in A(s)} R(s,a)^{\frac{1}{p}} =
\psi(c){\cal T}v(s).$  
Thus,  $\mathcal{T}$ is strongly subhomogeneous.

\underline{Case  $p<0.$} Here  $y\to y^p$ and $y\to y^{1/p}$ are decreasing.
It holds  $\epsilon^p  \ge [M(v)(s,a)]^p  \ge B^p$ and 
$${\cal T}(cv)(s) \ge \left((1-\beta)u(s,a)^p +\beta [M(cv)(s,a)]^p\right)^{1/p}$$
for all $(s,a)\in \mathbb{K}.$ Next from (\ref{ji})
$$(1-\beta)u(s,a)^p +\beta [M(cv)(s,a)]^p\le L(s,a):=
(1-\beta)u(s,a)^p +c^p\beta [M(v)(s,a)]^p.$$
We want to find $b\in (c,1)$ such that 
\begin{eqnarray*}
 L(s,a) \le R(s,a)&:=&
(1-\beta)b^pu(s,a)^p + \beta b^p[M(v)(s,a)]^p\\
&= &
 b^p((1-\beta) u(s,a)^p + \beta  [M(v)(s,a))]^p.
 \end{eqnarray*}
This inequality is equivalent to
\begin{equation}
\label{3}
(c^p-b^p) \beta  [M(v)(s,a)]^p\le (1-\beta) u(s,a)^p(b^p -1).
\end{equation}
Since $p<0$ we note that  
$$
(c^p-b^p) \beta  [M(v)(s,a)]^p\le (c^p-b^p) \beta \epsilon^p$$
and 
$$
(1-\beta) B^p (b^p -1)\le (1-\beta) u(s,a)^p(b^p -1).
$$
If we find $b \in (c,1)$ such that
\begin{equation}
\label{4}
(c^p-b^p) \beta \epsilon^p 
  \le (1-\beta) B^p (b^p -1)
\end{equation}
then (\ref{3}) holds.  From (\ref{4}) we get
$$ b^p \ge \frac{(1-\beta) B^p + c^p\beta \epsilon^p }{(1-\beta) B^p + \beta \epsilon^p }.
$$
Put  
\begin{equation}\label{numer2}
b= \psi(c):=
\left(\frac{(1-\beta) B^p + c^p\beta \epsilon^p }{(1-\beta) B^p + \beta \epsilon^p }\right)^{\frac1p}.
\end{equation}
Since $c^p>1$ we have that $\psi(c)^p >1$ and $\psi(c)^p < c^p.$  Hence $\psi (c) \in (c,1).$
Furthermore, by (\ref{ji}) we have
$${\cal T}(cv)(x) \ge L(s,a)^{\frac{1}{p}} \ge R(s,a)^{\frac{1}{p}}
$$
for all $(x,a)\in \mathbb{K}.$
Hence, 
${\cal T}(cv)(s) \ge \sup_{a\in A(s)} R(s,a)^{\frac{1}{p}}=
\psi(c){\cal T}v(s),$
that is,  $\mathcal{T}$ is strongly subhomogeneous.
 $\Box$\\

\noindent{\bf Proof of Theorem \ref{thm2} }\ 
By Theorem \ref{thm1}, the Bellman equation has  a solution  $v^* \in \widehat{{\cal A}}.$ 
 Take any $v_0 \in \widehat{{\cal A}}. $ 
Notice that for $t_0:= \epsilon/B$   we have
\begin{equation}
\label{preq1}
t_0 v^*(s)\le \epsilon \le  \ v_0(s) \le  B \le v^*(s)/t_{0}
\end{equation}
for all $s\in S.$  By Lemma \ref{SH}, $\cal T$  is strongly subhomogeneous, which means that 
there exists a continuous increasing function
$\psi:(0,1)\to (0,1)$ such that  $\psi( t_0)\in (t_0,1)$ and by (\ref{preq1})
$$ \psi(t_0)v^* = 
\psi(t_0){\cal T}v^*\le {\cal T}(t_0v^*)  
\le {\cal T}v_0.$$
We also have $t_0v_0\le v^*$ and therefore
$$
\psi(t_0){\cal T}v_0 \le {\cal T}(t_0v_0) 
\le {\cal T}v^*= v^*.$$
Hence, we have  ${\cal T}v_0 \le v^*/\psi(t_0)$ and
consequently
  $$\psi(t_0)v^*\le {\cal T}v_0\le 
\frac{v^*}{\psi(t_0)}.$$
Setting $t_1:=\psi(t_0)$ we get 
$$t_1v^*\le {\cal T}v_0\le v^*/t_1. $$
Put $t_{k+1}:=\psi( t_k),$ $k\in\N.$ Using this fact and  
iterating the above inequality and observing that the sequence $(t_k)$ is increasing  
we obtain
\begin{equation}\label{fin}
t_kv^*\le {\cal T}^{(k)}v_0\le v^*/t_k.
\end{equation}
As $(t_k)$ is bounded from above by one, $t^*:= \lim_{k\to\infty} t_k$ exists.
Note that then by the continuity and  monotonicity of $\psi $ we get
$$\lim_{k\to\infty} t_k =t^*=\lim_{k\to\infty} \psi(t_{k-1})=\psi(t^*).$$
Hence, $t^*=1.$ Otherwise,  from the strong homogeneity property  
we could choose again  $\psi( t^*)\in (t^*,1)$. From (\ref{fin}), we conclude
$$\left(1-\frac{1}{t_k}\right)v^*(s)\le(t_{k}-1)v^*(s)\le {\cal T}^{(k)}v_0(s)-v^*(s)\le 
\left(\frac{1}{t_k}-1\right)v^*(s)$$
for all $s\in S.$ Therefore, 
$$ \|\mathcal{T}^{(k)}v_0-v^*\|\le B \left(\frac{1}{t_{k}}-1\right),$$
 and consequently 
\begin{equation}
\label{2sol}\| {\cal T}^{(k)}v_0- v^*\|\to 0 
\ \ \mbox{  as}\ \  k\to \infty.
\end{equation}
Suppose now that $v_1^*\in\widehat{\mathcal{A}}$ is a solution to the Bellman equation 
different from $v^*.$ Substituting $v_0=v_1^*$
in (\ref{2sol}) and using equality ${\cal T}^{(k)}v_1^*=
v_1^*$ we get that $\| v_1^*-v^*\|=0.$ Hence, we must have $v_1^=v^*.$ $\Box$

\subsection{Appendix D}

\noindent{\bf Proof of Theorem \ref{thm4} } We know that $U(f)$ satisfies equation   (\ref{fpp1}). The operator
$\mathcal{T}_f$ is strongly subhomogeneous. The proof proceeds along the similar lines as the proof of Lemma \ref{SH}
with $\mathcal{T}$ and $\widehat{{\mathcal A}}$ replaced by $\mathcal{T}_f$ and  $\widehat{{\mathcal M}},$ 
respectively. The uniqueness mimics the proof of  
Theorem \ref{thm2} with the operator $\mathcal{T}_f$. $\Box$\\

\noindent{\bf Proof of Theorem \ref{thm3} }
Assume (U1)-(A3).
By Theorem \ref{thm1u} there is a unique 
 $v^* \in \widehat{\mathcal{U}}$ and by Lemma \ref{l1}(b) there exists 
$f^*\in \Phi$ such that
$$ v^*(s)= {\cal T}v^*(s) = H(s,f^*(s),v^*)\quad\mbox{for all}\quad s\in S.$$
By Theorem \ref{thm4}, we get 
$v^*(s)=U(f^*)(s)$ for all $s\in S.$ 
From the proof of Theorem \ref{thm1u}, we know that
$$v^*(s)= \lim_{k\to\infty}{\cal T}^{(k)}B(s),\quad\  s\in S.$$
Let $\pi=(f_n)$ be any Markov policy. 

\underline{ Case $ p>0.$}  
For each 
$s\in S$ and  $k\in \mathbb{N},$ we have
$$
{\cal T}^{(k)}B(s)\ge 
{\cal T}_{f_1}\cdots {\cal T}_{f_k}B(s)
\ge {\cal T}_{f_1}\cdots {\cal T}_{f_k}{\bf 0}(s)=
 U_k(\pi)(s),$$
where the second inequality is only valid for positive $p$.
Hence, by Lemma \ref{nierutil}(b) we let $k\to \infty$ and  
$$v^*(s)\ge \lim_{k\to\infty} 
U_k(\pi)(s),\quad  \ s\in S.$$
Thus, we have shown that
$$v^*(s)=U(f^*)(s)\ge U(\pi)(s)$$
for each policy $\pi\in\Pi_M$ and for all $s\in S.$ From Remark \ref{all} it follows that
$v^*\ge U(\pi)$ for any policy $\pi\in\Pi.$

\underline{Case $ p<0.$}   
For each $n\in \mathbb{N},$ put 
$$w_n^*(s):= \sup_{\pi\in\Pi} U_n(\pi)(s),\quad \ s\in S.$$
By standard dynamic programming techniques  \cite{hl} and Lemma \ref{l3}, we conclude
that $w_n^*$ is upper semicontinuous. From Lemma \ref{nierutil}(a), 
it follows that the sequence $(w_n^*)$ is monotone decreasing. By Remark \ref{stala}, $\epsilon\le w_n^*\le B$
for every $n\in\N.$
Thus, the limit $w^*(s)=\lim_{n\to\infty}w_n^*(s),$ for $s\in S,$ 
exists and by Lemma 2.41 in \cite{ab} the function $w^*$ is upper semicontinuous. Moreover, we have
$$w_{n+1}^*(s) ={\cal T}w^*_n(s),\ \ s\in S,\ n\in \mathbb{N}.$$ 
From this  equation and Lemma \ref{l2}, we infer that
$w^*={\cal T}w^*$. Next, by Lemma \ref{l1}(b) there exists $f^*\in \Phi$ such that
$$w^*(s)= {\cal T}w^*(s) ={\cal T}_{f^*}w^*(s)\ \ \mbox{for all}\ \ s\in S.$$ 
By Theorems \ref{thm1u} and \ref{thm4} we have that $w^*(s) = v^*(s)= U(f^*)(s)$ for all $s\in S.$ 
Now observe that, for each $n\in\mathbb{N}$ and $\pi\in \Pi_M$,
$$w_n^*(s) \ge {\cal T}_{f_1}\cdots {\cal T}_{f_n}{\bf 0}(s)=U_n(\pi)(s),\ \ s\in S.$$ Hence,
$$U(f^*)(s)=\lim_{n\to \infty} w_n^*(s) \ge
\lim_{n\to \infty} U_n(\pi)(s)=U(\pi)(s),\ \  s\in S.$$
Again from  Remark \ref{all} we conclude  that
$v^*\ge U(\pi)$ for any policy $\pi\in\Pi.$

Thus, in both cases we have shown that $f^*$ is an optimal stationary policy. 

For (C1)-(C3) the proof proceeds analogously. 
$\Box$

\end{document}